\newcounter{thmcounter}[section]
\theoremstyle{definition}
\theoremstyle{plain}
\declaretheorem[name=Assumption,sibling=thmcounter]{assumption}
\theoremstyle{definition}
\declaretheorem[name=Setting,sibling=thmcounter]{setting}
\theoremstyle{remark}
\newtheorem{remark}[thmcounter]{Remark}
\newcommand{\fp}{\texttt{[FP]}}
\newcommand{\mvgh}{\mathtt{mv}_{h}}
\newcommand{\articletitle}{Accelerating Natural Gradient Descent for PINNs with
    Randomized Numerical Linear Algebra}
\newcommand{\bilinear}[3][ ]{%
\ifthenelse{\equal{#1}{}}%
{\left({#2},\,{#3}\right)}%
{#1\left({#2},\,{#3}\right)}}
\newcommand{\NN}{u_{\vtheta}^{\mathcal{NN}}}
\providecommand{\keywords}[1]
{
  \small	
  \textbf{{Key words: }} #1
}
\title{\articletitle}
\author{%
  Ivan Bioli\textsuperscript{a,b,}\thanks{Corresponding author.\\ \textit{Email
  addresses}: \href{mailto:ivan.bioli@unipv.it}{ivan.bioli@unipv.it} (Ivan
  Bioli), \href{mailto:carlo.marcati@unipv.it}{carlo.marcati@unipv.it} (Carlo
  Marcati), \href{mailto:giancarlo.sangalli@unipv.it}{giancarlo.sangalli@unipv.it} (Giancarlo Sangalli).}
  \and Carlo Marcati\textsuperscript{c}
  \and Giancarlo Sangalli\textsuperscript{a,d}
  }
\date{
    \footnotesize
    \textit{
    \textsuperscript{a} Department of Mathematics, University of Pavia, Via A. Ferrata 5, 27100 Pavia, Italy\\
    \textsuperscript{b} Department of Civil Engineering and Architecture,
    University of Pavia, Via A. Ferrata 3, 27100 Pavia, Italy\\
    \textsuperscript{c} Institut Camille Jordan, Lyon 1 Université, 43 Boulevard du 11 Novembre 1918, 69622 Villeurbanne Cedex, France\\
    \textsuperscript{d} Istituto di Matematica Applicata e Tecnologie
    Informatiche “E. Magenes”, CNR, Via A. Ferrata 1, 27100 Pavia, Italy\\
    }
}
\begin{document}
\sloppy
\maketitle
\begin{abstract}
  Natural Gradient Descent (NGD) has emerged as a promising optimization algorithm
for training neural network-based solvers for partial differential equations
(PDEs), such as Physics-Informed Neural Networks (PINNs). However, its practical
use is often limited by the high computational cost of solving linear systems
involving the Gramian matrix. While matrix-free NGD methods based on the
conjugate gradient (CG) method avoid explicit matrix inversion, the
ill-conditioning of the Gramian significantly slows the convergence of CG. In
this work, we extend matrix-free NGD to broader classes of problems than
previously considered and propose the use of Randomized Numerical Linear Algebra
(RandNLA) techniques for efficient preconditioning of the inner CG solver. The
resulting algorithm demonstrates substantial performance improvements over
existing NGD-based methods and other state-of-the-art optimizers on a range of
PDE problems discretized using neural networks.
\end{abstract}

\keywords{Natural Gradient, Randomized Numerical Linear Algebra, Preconditioning, Neural Networks, PDEs}

\vspace{2ex}

\section{Introduction}
Neural network (NN) based approaches for solving partial differential equations
(PDEs) have recently attracted significant attention as promising alternatives
to traditional numerical methods for both forward and inverse problems. A common
strategy is to incorporate the governing PDE directly into the loss function, as
in Physics-Informed Neural Networks (PINNs)
\cite{dissanayake_neural-network-based_1994,raissi_physics-informed_2019,sirignano_dgm_2018},
Variational PINNs \cite{kharazmi_variational_2019,kharazmi_hp-vpinns_2021}, the
Deep Ritz method \cite{e_deep_2017}, and various subsequent extensions
\cite{badia_finite_2024,han_solving_2018,berman_neural_2024,de_ryck_wpinns_2024,rojas_robust_2024}.
These methods hold the promise of providing mesh-free approximations of PDE
solutions and of exploiting the approximation power of neural networks
\cite{devore_neural_2021,de_ryck_approximation_2021}, particularly in high
dimensions \cite{Ziang2021}, positioning them as promising tools for
high-dimensional, inverse, or complex domain problems.
Despite this theoretical promise, the practical adoption and reliability of
NN-based PDE solvers are often hindered by significant optimization challenges.
Standard first-order algorithms, notably Adam \cite{kingma_adam_2017}, have been
reported to often perform poorly, stagnating at solutions with low accuracy
\cite{krishnapriyan_characterizing_2021,wang_understanding_2021,rathore_challenges_2024,wang_when_2022}.
This issue is generally attributed not to the expressiveness of neural networks,
but to the ill-conditioning of the loss landscape
\cite{rathore_challenges_2024,de_ryck_operator_2024,krishnapriyan_characterizing_2021}.
Consequently, specialized optimization strategies tailored to PDE-related
loss functions are increasingly being advocated
\cite{cuomo_scientific_2022,muller_position_2024,ryck_numerical_2024}.

To counteract these training pathologies, various strategies have been proposed.
A body of work focuses on loss weighting schemes or adaptive selection of
collocation points to balance contributions from different terms in the loss and
reduce quadrature errors
\cite{wang_understanding_2021,bonito_convergence_2025,van_der_meer_optimally_2022,wang_respecting_2024,wu_comprehensive_2023}.
While these techniques can mitigate some failure modes, they do not fully
resolve the underlying ill-conditioning introduced by discretizing a PDE
operator. Only recently, more principled second-order (or curvature-based) optimization algorithms
have begun to emerge
\cite{muller_achieving_2023,muller_position_2024,de_ryck_operator_2024,zeng_competitive_2022,dangel_kronecker-factored_2024,rathore_challenges_2024,hao_gauss_2024,jnini_gauss-newton_2024},
often derived from preconditioning the underlying infinite-dimensional PDE
operator
\cite{de_ryck_operator_2024,muller_position_2024,muller_achieving_2023,hao_gauss_2024}.
Among them, Natural Gradient Descent (NGD) has emerged as a particularly
promising and unifying framework, as several earlier approaches can be
interpreted as NGD with different choices of the optimization metric
\cite{nurbekyan_efficient_2023,muller_position_2024}.

NGD has demonstrated remarkable performance, achieving high accuracy in few
iterations by using a search direction of the form $\vd = - (\mG(\vtheta) + \mu
    \mI)^{-1}\nabla L(\vtheta)$, where $\vtheta \in \R^p$ denotes the neural network
parameters, $L$ is the loss function, $\mG(\vtheta)$ the positive semidefinite
Gramian matrix (see \Cref{sec:ngd}), and $\mu > 0$ a regularization parameter.
However, solving linear systems with $(\mG(\vtheta) + \mu \mI)$ via direct
methods incurs a cubic cost $\calO(p^3)$ in the number of network parameters,
making NGD impractical for large-scale networks. To alleviate this, matrix-free
strategies have been proposed
\cite{jnini_gauss-newton_2024,zeng_competitive_2022}, where the system is solved
approximately using the conjugate gradient (CG) method using only matrix-vector
products (matvecs) with $\mG(\vtheta)$, without assembling the matrix. In
practice, however, $\mG(\vtheta)$ is often ill-conditioned, as shown in
\Cref{fig:decay_sample}, leading to slow CG convergence and motivating the need
for preconditioning. In this work, we address this issue using tools from
Randomized Numerical Linear Algebra (RandNLA)
\cite{martinsson_randomized_2020,murray_randomized_2023}.
\subsection{Contributions}
The core contributions of this work are as follows:
\begin{enumerate}
    \item We provide a general
          methodology for efficiently computing matrix-vector products with the Gramian
          using automatic differentiation, generalizing beyond the specific metrics
          considered in prior works
          \cite{muller_position_2024,jnini_gauss-newton_2024,zeng_competitive_2022}.
    \item We propose employing techniques from RandNLA
          to construct a preconditioner for $(\mG(\vtheta) + \mu \mI)$. In particular,
          we focus on the randomized Nystr{\"o}m approximation
          \cite{frangella_randomized_2023,tropp_fixed-rank_2017} and the Randomly Pivoted
          Partial Cholesky (RPCholesky) decomposition
          \cite{chen_randomly_2024,epperly_embrace_2025}. By relying on a low-rank
          approximation of the Gramian, these preconditioners leverage the empirically
          observed strong spectral decay (see \Cref{fig:decay_sample}), turning it from a computational challenge into
          an asset.
    \item We present a modular and scalable framework for efficient NGD
          that decouples the underlying NGD implementation (whether full or matrix-free)
          from the preconditioner construction, enabling flexible algorithmic
          combinations.
\end{enumerate}
We introduce two novel optimization algorithms for
neural network-based PDE solvers that we call \emph{Nystr{\"o}mNGD} and
\emph{RPCholNGD}, which combine matrix-free NGD with randomized Nystr{\"o}m and
RPCholesky preconditioning, respectively. Finally, we present extensive
numerical experiments demonstrating that our proposed methods achieve
significant acceleration and improved final accuracy compared to existing
NGD-based approaches and other state-of-the-art optimizers.

\subsection{Related work and comparison with existing optimization methods}
Compared to full-memory quasi-Newton methods such as BFGS and SSBroyden
\cite{wright_numerical_2006,urban_unveiling_2025,kiyani_optimizing_2025}, our
approach differs fundamentally both conceptually and computationally.
Conceptually, NGD is rooted in a first-optimize-then-discretize paradigm: one
chooses a metric in function space, possibly tailored to the underlying PDE, and
then pulls it back to parameter space through the neural network ansatz.
Consequently, the Gramian need not coincide with an approximation of the Hessian
of the parameter-space loss. In contrast, quasi-Newton methods operate directly
in parameter space and iteratively approximate the Hessian (or inverse Hessian)
of $L(\vtheta)$, thereby losing the functional interpretation associated with
the underlying infinite-dimensional problem. Computationally, Nystr{\"o}mNGD and
RPCholNGD combine matrix-free Gramian matrix-vector products (matvecs) with
rank-$\ell$ preconditioners. This yields a tunable memory footprint of order
$\calO(p\ell)$, where $p$ is the number of neural network parameters, in stark
contrast to the $\calO(p^2)$ storage required by full-memory quasi-Newton
methods. This low-rank and matrix-free structure also distinguishes our methods
from standard NGD implementations that rely on explicitly assembling or
factorizing the full Gramian.

While matrix-free NGD implementations have been explored for specific metrics
\cite{jnini_gauss-newton_2024,zeng_competitive_2022}, none of these prior works
address the inherent ill-conditioning of the Gramian, which severely hampers CG
convergence in practice. To overcome this bottleneck, our work bridges NGD with
RandNLA, whose use in Machine Learning is well-established
\cite{frangella_sketchysgd_2024,derezinski_recent_2024,udell_randomized_2023}.
Randomized Nystr{\"o}m preconditioning \cite{frangella_randomized_2023} has been
applied to PINNs in \cite{rathore_challenges_2024}, but only as a final stage
after Adam and L-BFGS, and applying the Nystr{\"o}m approximation, designed for
symmetric positive semidefinite matrices, to the Hessian, which may be
indefinite. Randomized techniques have been applied to NGD 
\cite{mckay_near-optimal_2025,guzman-cordero_improving_2025} in the so-called sketch-and-solve paradigm,
whereas we employ them to construct a preconditioner. Alternative notions of preconditioning, where the
preconditioner is embedded into the loss function, are explored in
\cite{liu_preconditioning_2024,badia_finite_2024}. Kronecker-Factored
Approximate Curvature (K-FAC) \cite{dangel_kronecker-factored_2024} has also
been used to approximate the Gramian
\cite{dangel_kronecker-factored_2024}, but not as a preconditioner, and it
requires problem-specific adaptations.

\subsection{Paper outline}
The remainder of the paper is structured as follows. In
\Cref{sec:preliminaries}, we introduce the abstract formulation of the problem
and the neural network framework. \Cref{sec:ngd_matrix_free} reviews Natural
Gradient Descent and presents its matrix-free formulation.
\Cref{sec:modular_frameworks} presents our modular and scalable framework for
efficient Natural Gradient Descent, including a detailed description of our
novel preconditioned matrix-free NGD algorithms, Nystr{\"o}mNGD and RPCholNGD.
\Cref{sec:numerical_experiments} provides numerical experiments on a range of
PDE-related problems, comparing our proposed methods against state-of-the-art
optimizers. Finally, \Cref{sec:discussion} summarizes our findings and outlines
directions for future research.

\begin{figure}
    \centering
    \begin{subfigure}[t]{0.49\textwidth}
        \includegraphics[width=\textwidth]{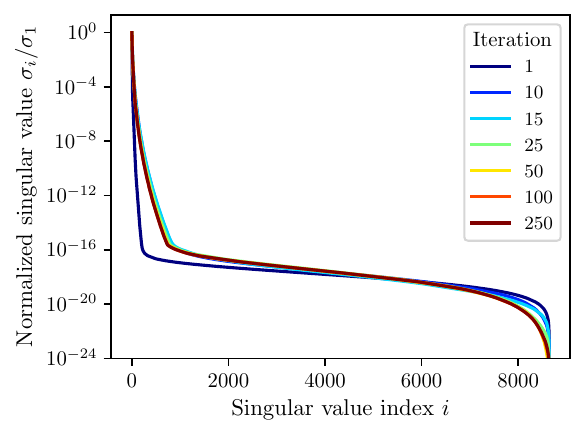}
    \end{subfigure}
    \hfill
    \begin{subfigure}[t]{0.49\textwidth}
        \includegraphics[width=\textwidth]{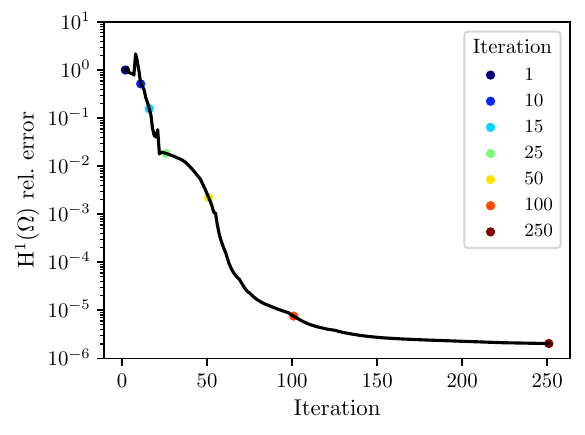}
    \end{subfigure}
    \caption{Spectral decay of the Gramian matrix during PINN training for the
        3D Poisson problem with exact solution $u(x,y,z) = \sin(\pi x)\sin(\pi
            y)\sin(\pi z)$, using Energy Natural Gradient Descent
        \cite{muller_achieving_2023}. The neural network is  fully connected  with
        three hidden layers of 64 neurons each. Left: Spectral decay of the Gramian
        $\mG(\vtheta)$ at various iterations, where each color denotes a different
        iteration. Right: Evolution of the $\rmH^1(\Omega)$ relative error during
        training.}
    \label{fig:decay_sample}
\end{figure}

\section{Preliminaries}
\label{sec:preliminaries}
\subsection{Notation}
Hilbert and Banach spaces are indicated using roman capital letters. Linear
operators are denoted with calligraphic letters. Vectors in $\R^n$ are in
boldface, and matrices use boldface capital letters. The Euclidean gradient of a
function $L: \R^n \to \R$ is denoted by $\nabla L$.

Let $\rmV$ be a real Hilbert space. Its inner product and norm are denoted by
$\innerh[\rmV]{\cdot}{\cdot}$ and $\norm{\cdot}_{\rmV}$. Its (topological) dual
is indicated $\dual{\rmV}$, and the duality pairing by
$\dualpair[\rmV]{\cdot}{\cdot}$. The space of bounded linear operators from
$\rmV$ to another Hilbert space $\rmW$ is denoted by $\calL(\rmV, \rmW)$. We say
that $\calA \in \calL(\rmV, \dual{\rmV})$ is symmetric positive semidefinite
(SPSD) if the associated bilinear form $a(u,v) = \dualpair[\rmV]{\calA u}{v}$ is
symmetric and satisfies $a(u,u) \geq 0$ for all $u \in \rmV$. It is symmetric
positive definite (SPD) if $a$ is also coercive, i.e., if $\exists\, \alpha > 0$
such that $a(u,u) \geq \alpha \norm{u}_{\rmV}^2$ for all $u \in \rmV$.

\subsection{Abstract Setup}
\label{sec:abstract_setup}
Let $\rmV$ be a Hilbert space equipped with the inner product
$\innerh[\rmV]{\cdot}{\cdot}$. We endow $\rmV$ with a potentially
point-dependent metric $\bilinear[g_u]{\cdot}{\cdot}$, giving $\rmV$ the
structure of a Riemannian manifold. Specifically, the map $u \mapsto g_u$ is
assumed to be smooth from $\rmV$ to $\mathcal{L}^2_{\mathrm{sym}}(\rmV,\R)$, the
space of continuous, symmetric, bilinear forms on $\rmV$. We denote $\norm{w}_{g_u} = \sqrt{\bilinear[g_u]{w}{w}}$. Representative examples of these spaces and
metrics are provided in \Cref{rmk:metrics_examples}. We consider a general
minimization problem of the form
\begin{equation}
    \label{eq:prob_infinitedim}
    \inf_{u\in\rmV} E(u),
\end{equation}
where $E:\rmV\to\R$ is a smooth function (with respect to the Riemannian metric
$g$), referred to as the \emph{energy}. The Riemannian gradient and Hessian of
$E$ (in the metric $g$) are denoted by $\grad[g] E$ and $\Hess[g] E$, respectively.

To approximate the solution to \eqref{eq:prob_infinitedim}, we introduce a parametrized ansatz
\[
    \calN = \left\{ u_{\vtheta} \defas \rho(\vtheta) \,:\, \vtheta \in \Theta \right\} \subseteq \rmV,
\]
where $\rho: \Theta \to \rmV$ is a smooth map and $\Theta \subseteq \R^p$ is open.
We then consider the finite-dimensional optimization problem
\begin{equation}
    \label{eq:prob_finite}
    \inf_{\vtheta \in \Theta} L(\vtheta) \defas \inf_{\vtheta \in \Theta} E(u_{\vtheta}) = \inf_{\vtheta \in \Theta} (E \circ \rho)(\vtheta).
\end{equation}
Here, $u_{\vtheta}$ could be a neural network with parameters $\vtheta$,
though the framework applies more generally. We will refer to
$\Theta\subseteq\R^p$ as the \emph{parameter space}, and to $\rmV$ as the
\emph{function space}. Note that while the original optimization problem
\eqref{eq:prob_infinitedim} may be infinite dimensional, the ansatz is
parametrized by a finite-dimensional $\vtheta\in\R^p$.

\subsubsection{Discretization of PDEs}
\label{sec:discretization_of_PDEs}
The focus of this article is on problems arising from the discretization of
PDEs. In this case, $\rmV$ is a Hilbert space of functions defined on a domain
$\Omega \subset \R^d$, e.g., $\rmV = \rmH^1(\Omega)$, and $E$ is derived from
the strong or weak formulation of a PDE on $\Omega$. This
abstract setting \eqref{eq:prob_finite} encompasses, for
instance, the Deep Ritz method \cite{e_deep_2017}, PINNs
\cite{raissi_physics-informed_2019,dissanayake_neural-network-based_1994},
Variational PINNs (VPINNs)\cite{kharazmi_variational_2019}, and other PINNs
variants
\cite{kharazmi_hp-vpinns_2021,badia_finite_2024,han_solving_2018,de_ryck_wpinns_2024,rojas_robust_2024}.

As a general framework for PDEs we consider the following. Let $\rmV$ be Hilbert
space and let $\rmX, \rmY$ be Banach spaces. We seek $u \in \rmV$ such that
\begin{equation}
    \label{eq:pde_general}
    \begin{cases}
        \calF(u) = f, & \text{in } \Omega,         \\
        \calB(u) = g, & \text{on } \partial\Omega,
    \end{cases}
\end{equation}
where $\calF:\rmV\to\rmX$ is a differential operator, $f\in\rmX$ is the source
term, $\calB:\rmV\to\rmY$ is a boundary operator, and $g\in\rmY$ is the boundary
data. The energy can then be defined as
\[
    E(u) = \frac{1}{2}\norm{\calF(u) - f}_{\rmX}^2 + \frac{1}{2}\norm{\calB(u) - g}_{\rmY}^2,
\]
where possibly weighting factors can be included, and
data misfit terms can be added.

The choice of function spaces and well-posedness depend on the specific PDE and
on whether a strong or weak formulation is used. For example, consider the Poisson
equation with homogeneous Dirichlet boundary conditions:
\begin{equation}
    \label{eq:poisson}
    \begin{cases}
        -\Delta u = f, & \text{in } \Omega,         \\
        u = g,         & \text{on } \partial\Omega.
    \end{cases}
\end{equation}
For a strong formulation, one can employ the spaces $\rmV = \rmH^2(\Omega)$, $\rmX = \rmL^2(\Omega)$, and $\rmY = \rmL^{2}(\partial\Omega)$, with
energy
\[
    E(u) = \frac{1}{2}\norm{\Delta u + f}_{\rmL^2(\Omega)}^2 + \frac{1}{2}\norm{u - g}_{\rmL^{2}(\partial\Omega)}^2.
\]
With weak formulations, boundary conditions can be imposed either weakly (with $\rmY = \rmH^{1/2}(\partial\Omega)$) or strongly. 
We adopt the latter approach, for which the natural spaces are $\rmV = \rmH^1_0(\Omega)$, $\rmX = \dual{\left(\rmH^1_0(\Omega)\right)} = \rmH^{-1}(\Omega)$.
Given a lifting $\tilde{g} \in \rmH^1(\Omega)$ of the Dirichlet data, the solution can be written as $u = u_0 + \tilde{g}$, where the energy is
\[
    E(u_0) = \frac{1}{2}\norm{\calA u_0 - \tilde{f}}_{\rmH^{-1}(\Omega)}^2,
    \qquad \text{with} \qquad
    \inner{\calA u}{v}_{\rmH^{-1} \times \rmH^1_0}
    =
    \int_{\Omega} \nabla u \cdot \nabla v,
    \quad
    \tilde{f} = f - \calA \tilde{g}.
\]

\subsection{Neural Networks}
\label{sec:neural_networks}
Neural networks (NNs) serve as the primary parametrized ansatz in this work. For
simplicity, we focus on feedforward networks, though our methods are
architecture-agnostic and apply to other network types as well. A neural network
with input dimension $d$ and output
dimension $d'$ is a function $\NN: \R^d \to \R^{d'}$ defined as the composition of
affine and nonlinear transformations. Let $L$ be the number of hidden layers,
let $n_0 = d, n_{L+1} = d'$, and $n_1, \dots, n_L$ be the number of neurons in
each hidden layer, and let $\sigma$ be a nonlinear activation function. The
mapping induced by the NN
is defined as:
\[
    \begin{split}
        \vx_0            & = \vx \in \R^{d},                                                                                  \\
        \vx_\ell         & = \sigma(\mW_\ell \vx_{\ell-1} + \vb_\ell) \in \R^{n_\ell}, \qquad \text{for } \ell = 1, \dots, L, \\
        \NN(\vx) = \vx_{L+1} & = \mW_{L+1} \vx_L + \vb_{L+1} \in \R^{d'},
    \end{split}
\]
where $\mW_\ell \in \R^{n_\ell \times n_{\ell-1}}$ and $\vb_\ell \in
    \R^{n_\ell}$ are the weights and biases of layer $\ell$, respectively. The
parameters of the network are $\vtheta = [\mW_1, \vb_1, \dots, \mW_{L+1},
    \vb_{L+1}] \in \R^p$, stacked into a single vector, with $p =
    \sum_{\ell=1}^{L+1} (n_\ell n_{\ell-1} + n_\ell)$. No activation function is applied
in the last layer.

\subsubsection{Finite Element Interpolated Neural Networks (FEINNs)}
\label{sec:feinns}
While standard NNs are typically used with collocation-based methods like PINNs,
in order to deal with weak formulations of PDEs,
we consider Finite Element
Interpolated Neural Networks (FEINNs)
\cite{badia_finite_2024,badia_adaptive_2024,berrone_variational_2022}, where a
neural network is interpolated onto a finite element (FE) space.

Let $\Omega \subset \R^d$ be a Lipschitz polygonal/polyhedral domain with a conforming mesh $\calT_h$, and let $\rmV_h \subset \rmV = \rmH^1(\Omega)$ be the
FE space with $\rmV_{h,0} = \rmV_h \cap \rmH^1_0(\Omega)$. Denote the Lagrangian
basis of $\rmV_{h}$ by $\{\varphi_i\}_{i=1}^{N}$, where $\varphi_i$ is
associated to node
$\vx_i$, and let $\Pi_h: C^0(\Omega) \to \rmV_{h,0}$ be the FE interpolation
operator obtained by evaluation at interior nodes. Given a discrete lifting
$\bar{u}_h$ of Dirichlet data, the FEINN is defined as
\[
    u_{\vtheta}= \bar{u}_h + \Pi_h\NN =  \bar{u}_h + \sum_{i\,:\,\vx_i\in\mathring{\Omega}} \NN(\vx_i) \varphi_i.
\]
For simplicity, we assumed that Dirichlet boundary conditions are imposed on the
whole boundary, but mixed boundary conditions can also be accommodated in the
framework. Further details are provided in
\cite{badia_finite_2024,badia_adaptive_2024}.

This approach offers several advantages. First, Dirichlet boundary conditions
can be enforced by applying them to the FE interpolant using
standard FE techniques, avoiding the need to impose them directly on the neural
network output \cite{berrone_variational_2022,berrone_enforcing_2023}. Second,
it alleviates integration challenges, a known issue in neural network-based PDE
solvers \cite{rivera_quadrature_2022}, especially in the Deep Ritz method
\cite{taylor_stochastic_2025}. Finally, spatial derivatives can be computed at
the FE level, eliminating the need for automatic differentiation with respect to
the input of the neural network.

\section{Matrix-free Natural Gradient Descent}
\label{sec:ngd_matrix_free}
\subsection{Natural Gradient Descent}
\label{sec:ngd}
The Natural Gradient Descent (NGD) method
\cite{amari_natural_1998,martens_new_2020,nurbekyan_efficient_2023,muller_position_2024}
targets optimization problems of the form \eqref{eq:prob_finite} following the
first-optimize-then-discretize paradigm. Within the functional setting
established in \Cref{sec:abstract_setup}, NGD moves along the steepest descent
direction according to the metric $g$ on $\rmV$, the ``natural'' one for the
function $u_{\vtheta}$, and pulls it back to the parameter space.

Let $\vtheta$ be the current iterate. The steepest descent direction in function
space is $d = -\grad[g] E(u_{\vtheta})$, the Riemannian gradient of $E$ in
the metric $g$, and the corresponding update
\[
    u \leftarrow u_{\vtheta} - \alpha \grad[g] E(u_{\vtheta}).
\]
In NGD, the parameter update direction is $-\vd$, where the \emph{natural
    gradient} $\vd \in \R^p$ is such that the first-order expansion
\[
    u_{\vtheta - \alpha \vd} \approx u_{\vtheta} - \alpha \rmD\rho(\vtheta)[\vd] = u_{\vtheta} - \alpha \sum_{j=1}^{p} d_j \cdot \partial_{\theta_j}u_{\vtheta},
\]
best approximates (in the metric $g$) the update in function space, i.e.,
\[
    \vd = \argmin_{\vw \in \R^p} \norm{\grad[g] E(u_{\vtheta}) - \rmD\rho(\vtheta)[\vw]}_{g_{u_{\vtheta}}}.
\]
This corresponds to the $g_{u_{\vtheta}}$-orthogonal projection of
$\grad[g] E(u_{\vtheta})$ onto the model's \emph{generalized tangent space}
\[
    \Tang_{\vtheta}\calN
    =
    \Spann\left(
        \left\{
            \partial_{\theta_1}u_{\vtheta},
            \dots,
            \partial_{\theta_p}u_{\vtheta}
        \right\}
    \right)
    \subset \rmV,
\]
that is,
\[
    \bilinear[g_{u_{\vtheta}}]
    {\sum_{j=1}^{p} d_j \partial_{\theta_j}u_{\vtheta}}
    {\partial_{\theta_i}u_{\vtheta}}
    =
    \bilinear[g_{u_{\vtheta}}]
    {\grad[g] E(u_{\vtheta})}
    {\partial_{\theta_i}u_{\vtheta}}
    =
    \rmD E(u_{\vtheta})[\partial_{\theta_i}u_{\vtheta}]
    =
    \partial_{\theta_i} L(\vtheta),
\]
or, in compact form,
\begin{equation}
    \label{eq:natural_gradient}
    \mG(\vtheta)\vd = \nabla_{\vtheta} L(\vtheta),
    \qquad
    \left[\mG(\vtheta)\right]_{ij}
    =
    \bilinear[g_{u_{\vtheta}}]
    {\partial_{\theta_i}u_{\vtheta}}
    {\partial_{\theta_j}u_{\vtheta}},
\end{equation}
where $\mG(\vtheta)\in\R^{p\times p}$ is often referred to as the Gramian
\cite{zeinhofer_unified_2024,muller_position_2024} or (Fisher) information matrix
\cite{amari_natural_1998,martens_new_2020,nurbekyan_efficient_2023}.
If
$\partial_{\theta_1}u_{\vtheta}, \dots, \partial_{\theta_p}u_{\vtheta}$
are not linearly independent, the minimizer is not unique, and the
minimum-norm solution is selected, namely,
\begin{equation}
    \label{eq:natural_gradient_pinv}
    \vd = \mG(\vtheta)^{\dagger}\nabla_{\vtheta} L(\vtheta),
\end{equation}
where $\mG(\vtheta)^{\dagger}$ denotes the Moore--Penrose pseudoinverse of
$\mG(\vtheta)$.
As observed in the literature
\cite{zeinhofer_unified_2024,bonfanti_challenges_2024}, it is often beneficial
to add a damping term to improve numerical stability and convergence, yielding
the regularized direction
\begin{equation}
    \label{eq:regularized_ng}
    \vd = \left(\mG(\vtheta) + \mu \mI\right)^{-1}\nabla_{\vtheta} L(\vtheta),
\end{equation}
where $\mu > 0$ is a regularization parameter, typically adapted during the
optimization process. Once the (regularized) natural gradient descent is
computed, the parameters are updated as
\[
    \vtheta \leftarrow \vtheta - \alpha \vd = \vtheta - \alpha (\mG(\vtheta) + \mu \mI)^{-1}\nabla_{\vtheta} L(\vtheta),
\]
where the step-size $\alpha$ is typically chosen via a line-search procedure.

\subsection{A matrix-free formulation}
\label{sec:matrix-free}
Leaving aside the resolution of \eqref{eq:regularized_ng}, even assembling the
Gramian $\mG(\vtheta)$ in \eqref{eq:natural_gradient} can be computationally
prohibitive in both floating-point operations (flops) and memory. Indeed, $\mG(\vtheta)$ is a $p \times p$
matrix, where $p$ is the number of network parameters,
which can be in the order tens of thousands even for relatively small
neural networks. To mitigate this issue, we consider a matrix-free approach:
$\mG(\vtheta)$ is never explicitly assembled, and only
matrix-vector products are computed as needed. This is particularly efficient when
solving linear systems with $(\mG(\vtheta) + \mu \mI)$ using iterative methods
such as Conjugate Gradient (CG), which require only matvecs.

Prior work has demonstrated that matrix-vector products with the Gramian
\eqref{eq:natural_gradient} can be computed via automatic differentiation for
specific metrics \cite{jnini_gauss-newton_2024,muller_position_2024,zeng_competitive_2022}.
Here, we show that autodiff-based matvecs extend to broader metric classes.
Specifically, we show that matrix-vector products with the Gramian
\eqref{eq:natural_gradient} can be computed via automatic differentiation,
provided the metric satisfies the following assumption and it is discretized via
quadrature.
\begin{restatable}{assumption}{matfreeassumption}
    \label{assumption:metric}
    Let $\Omega \subset \R^d$ be a domain, and let $\rmV$ be a Hilbert space of functions on $\Omega$. Assume that the metric
    $g_u$ takes the form
    \begin{equation}
        \label{eq:metric_as_integral}
        \bilinear[g_u]{v}{w} = \int_{\Omega} \calF_u v(\vx) \cdot \calF_u w(\vx) \, \varphi_u(\vx) \,\mathrm{d}\vx,
    \end{equation}
    for all $v, w \in \Tang_u \rmV \subseteq \rmV$, where $\calF_u: \rmV \to
        \rmL^2(\Omega; \varphi_u)^k$ is linear and continuous, and $\varphi_u$ is a positive function.
\end{restatable}

\begin{restatable}{thm}{matfreethm}
    \label{thm:matfree}
    Let \Cref{assumption:metric} hold, and let $\mathtt{sg}:\R^p\to\R^p$ denote
    the stop-gradient operator, formally defined by $\mathtt{sg}(\vtheta) =
        \vtheta$ and $\nabla\mathtt{sg}(\vtheta) = \mathbf{0}$. If the integral
    \eqref{eq:metric_as_integral} is approximated using a quadrature rule with
    points $\{\vx_r\}_{r=1}^q$ and weights $\{w_r\}_{r=1}^q$, then
    \begin{equation}
        \label{eq:G_matrixfree}
        \begin{split}
            \mG(\vtheta)
             & = \sum_{r=1}^q w_r\, \varphi_{u_{\vtheta}}(\vx_r)\, \left(\mJ_{\vtheta}\calF_{u_{\mathtt{sg}(\vtheta)}} u_{\vtheta}(\vx_r)\right)^\top \mJ_{\vtheta}\calF_{u_{\mathtt{sg}(\vtheta)}} u_{\vtheta}(\vx_r) \\
             & =\mJ_{\vtheta}\mF(\vtheta)^\top \begin{bsmallmatrix}
                                                   w_1 \varphi_{u_{\vtheta}}(\vx_1)\mI_k & & \\
                                                   & \ddots & \\
                                                   & & w_q \varphi_{u_{\vtheta}}(\vx_q)\mI_k
                                               \end{bsmallmatrix} \mJ_{\vtheta}\mF(\vtheta),
            \quad\,\,\, \vF(\vtheta) = \begin{bsmallmatrix}
                                           \calF_{u_{\mathtt{sg}(\vtheta)}} u_{\vtheta}(\vx_1)\\
                                           \vdots\\
                                           \calF_{u_{\mathtt{sg}(\vtheta)}} u_{\vtheta}(\vx_q)
                                       \end{bsmallmatrix},
        \end{split}
    \end{equation}
    where $\mJ_{\vtheta}$ denotes the Jacobian with respect to $\vtheta$.
\end{restatable}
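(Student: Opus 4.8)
The plan is to start from the definition of the Gramian entries in \eqref{eq:natural_gradient}, substitute the integral form of the metric supplied by \Cref{assumption:metric}, discretize with the given quadrature rule, and then recognize the resulting bilinear expression as a weighted product of Jacobians. The structural fact I would exploit is that, for a \emph{fixed} base point $u$, the map $\calF_u$ is linear in its function argument (while it may depend on $u$ nonlinearly through the subscript). This linearity lets differentiation with respect to $\vtheta$ commute with the action of $\calF_u$, and the stop-gradient operator is exactly the device that keeps the base point $u = u_{\vtheta}$ frozen during this differentiation.

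Concretely, I would first combine \eqref{eq:natural_gradient} with \eqref{eq:metric_as_integral} to write
\[
    [\mG(\vtheta)]_{ij} = \int_{\Omega} \calF_{u_{\vtheta}}(\partial_{\theta_i} u_{\vtheta})(\vx) \cdot \calF_{u_{\vtheta}}(\partial_{\theta_j} u_{\vtheta})(\vx)\, \varphi_{u_{\vtheta}}(\vx)\,\mathrm{d}\vx,
\]
and then apply the quadrature rule to approximate this by $\sum_{r=1}^{q} w_r\, \varphi_{u_{\vtheta}}(\vx_r)\, \calF_{u_{\vtheta}}(\partial_{\theta_i} u_{\vtheta})(\vx_r)\cdot\calF_{u_{\vtheta}}(\partial_{\theta_j} u_{\vtheta})(\vx_r)$.

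The key step is the commutation identity
\[
    \calF_{u_{\vtheta}}(\partial_{\theta_i} u_{\vtheta}) = \partial_{\theta_i}\!\left(\calF_{u_{\mathtt{sg}(\vtheta)}} u_{\vtheta}\right),
\]
which I would establish by separating the two channels through which $\vtheta$ enters the right-hand side. The argument $u_{\vtheta}$ contributes $\calF_{u_{\vtheta}}(\partial_{\theta_i} u_{\vtheta})$ by linearity of $\calF_u$ in its argument, whereas the base point entering via $\calF_{u_{\mathtt{sg}(\vtheta)}}$ contributes nothing because $\nabla\mathtt{sg}(\vtheta) = \mathbf{0}$ by definition; evaluation at the current parameter then restores $\calF_{u_{\mathtt{sg}(\vtheta)}} = \calF_{u_{\vtheta}}$. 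Consequently each $\R^k$-valued quantity $\calF_{u_{\vtheta}}(\partial_{\theta_i} u_{\vtheta})(\vx_r)$ is precisely the $i$-th column, restricted to the block of $k$ rows associated with $\vx_r$, of the Jacobian $\mJ_{\vtheta}\vF(\vtheta)$, where $\vF(\vtheta)$ stacks the $q$ vectors $\calF_{u_{\mathtt{sg}(\vtheta)}} u_{\vtheta}(\vx_r) \in \R^k$.

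Finally, I would reassemble the quadrature sum. Writing $\mathbf{D} = \mathrm{diag}\big(w_1\varphi_{u_{\vtheta}}(\vx_1)\mI_k, \dots, w_q\varphi_{u_{\vtheta}}(\vx_q)\mI_k\big)$, the double sum over quadrature points and over the $k$ output components is exactly the $(i,j)$ entry of $(\mJ_{\vtheta}\vF(\vtheta))^\top \mathbf{D}\,(\mJ_{\vtheta}\vF(\vtheta))$, matching \eqref{eq:G_matrixfree}. The main obstacle is the rigorous justification of the commutation identity: one must carefully distinguish the linear argument from the frozen base point in $\calF_{u_{\mathtt{sg}(\vtheta)}} u_{\vtheta}$ and invoke both the linearity of $\calF_u$ and the defining property $\nabla\mathtt{sg} = \mathbf{0}$. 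The remaining index bookkeeping into $k$-row blocks and the identification with the matrix product are routine.
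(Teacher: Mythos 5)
Your proposal is correct and follows essentially the same route as the paper's proof: substitute the integral form of the metric into the Gramian entries, apply the quadrature rule, establish the key commutation identity $\calF_{u_{\vtheta}}\partial_{\theta_i}u_{\vtheta} = \partial_{\theta_i}\bigl(\calF_{u_{\mathtt{sg}(\vtheta)}}u_{\vtheta}\bigr)$ using the stop-gradient to freeze the base point, and reassemble the double sum as the weighted Jacobian product. One small precision: the paper justifies the commutation by writing $\partial_{\theta_i}u_{\vtheta}$ as a limit of difference quotients and uses the \emph{continuity} of $\calF_u$ (to interchange the limit with the operator) in addition to linearity, so your phrase ``this linearity lets differentiation commute'' should invoke both properties, which \Cref{assumption:metric} indeed provides.
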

\begin{proof}
    \label{proof:thm_matfree}
    Let $\ve_h\in\R^p$ denote the $h$-th vector of the canonical basis, i.e.,
    the vector that has a $1$ in the $h$-th entry and $0$ in all other entries.
    Using the linearity and continuity of $\calF_{u_{\vtheta}}$,
    \begin{equation*}
        \begin{split}
            \calF_{u_{\vtheta}} \partial_{\theta_h}u_{\vtheta} & = \calF_{u_{\mathtt{sg}(\vtheta)}} \partial_{\theta_h}u_{\vtheta} = \calF_{u_{\mathtt{sg}(\vtheta)}}\left(\lim_{t\to 0}\frac{u_{\vtheta + t \ve_h} - u_{\vtheta}}{t}\right)                                                          \\
                                                               & =\lim_{t\to 0} \calF_{u_{\mathtt{sg}(\vtheta)}} \left(\frac{u_{\vtheta + t \ve_h} - u_{\vtheta}}{t}\right) =\lim_{t\to 0} \frac{\calF_{u_{\mathtt{sg}(\vtheta)}} u_{\vtheta + t \ve_h} - \calF_{u_{\mathtt{sg}(\vtheta)}} u_{\vtheta}}{t} \\
                                                               & = \partial_{\theta_h}\calF_{u_{\mathtt{sg}(\vtheta)}} u_{\vtheta}.
        \end{split}
    \end{equation*}
    Note that the use of the stop-gradient operator is crucial to ensure that we
    do not differentiate $\calF_{u_{\vtheta}}$ with respect to $\vtheta$. Hence,
    \begin{equation*}
        \begin{split}
            [\mG(\vtheta)]_{ij} & = \bilinear[g_{u_{\vtheta}}]{\partial_{\theta_i}u_{\vtheta}}{\partial_{\theta_j}u_{\vtheta}}
            = \int_{\Omega} \calF_{u_{\vtheta}} \partial_{\theta_i}u_{\vtheta} \cdot \calF_{u_{\vtheta}} \partial_{\theta_j}u_{\vtheta} (\vx) \,\varphi_{u_{\vtheta}} \,\mathrm{d}\vx                                            \\
                                & = \int_{\Omega} \partial_{\theta_i}\calF_{u_{\mathtt{sg}(\vtheta)}} u_{\vtheta} \cdot \partial_{\theta_j}\calF_{u_{\mathtt{sg}(\vtheta)}} u_{\vtheta} \,\varphi_{u_{\vtheta}} \,\mathrm{d}\vx,
        \end{split}
    \end{equation*}
    or, in matrix form,
    \[
        \mG(\vtheta) = \left(\D_{\vtheta} \calF_{u_{\mathtt{sg}(\vtheta)}}u_{\vtheta}\right)^* \, \D_{\vtheta} \calF_{u_{\mathtt{sg}(\vtheta)}}u_{\vtheta},
    \]
    where the Hilbert adjoint $\left(\D_{\vtheta}
        \calF_{u_{\mathtt{sg}(\vtheta)}}u_{\vtheta}\right)^*$ is taken with respect
    to the inner product on $\rmL^2(\Omega, \varphi_u)^k$, and we identify a
    matrix with the induced linear map.

    Applying a quadrature formula with nodes $\{\vx_r\}_{r=1}^q$ and weights
    $\{w_r\}_{r=1}^q$, we get
    \begin{equation*}
        \begin{split}
            [\mG(\vtheta)]_{ij} & = \sum_{r=1}^q w_r \, \partial_{\theta_i}\calF_{u_{\mathtt{sg}(\vtheta)}} u_{\vtheta} (\vx_r) \cdot \partial_{\theta_j}\calF_{u_{\mathtt{sg}(\vtheta)}} u_{\vtheta}(\vx_r) \,\varphi_{u_{\vtheta}}(\vx_r)
        \end{split}
    \end{equation*}
    and thus, in matrix form:
    \begin{equation*}
        \begin{split}
            \mG(\vtheta) & = \sum_{r=1}^q w_r\, \varphi_{u_{\vtheta}}(\vx_r)\, \left(\mJ_{\vtheta}\calF_{u_{\mathtt{sg}(\vtheta)}} u_{\vtheta}(\vx_r)\right)^\top \mJ_{\vtheta}\calF_{u_{\mathtt{sg}(\vtheta)}} u_{\vtheta}(\vx_r) \\
                         & = \mJ_{\vtheta}\mF(\vtheta)^\top \begin{bsmallmatrix}
                                                                w_1 \varphi_{u_{\vtheta}}(\vx_1)\mI_k & & \\
                                                                & \ddots & \\
                                                                & & w_q \varphi_{u_{\vtheta}}(\vx_q)\mI_k
                                                            \end{bsmallmatrix} \mJ_{\vtheta}\mF(\vtheta).
        \end{split}
    \end{equation*}
\end{proof}
As a consequence of \Cref{thm:matfree}, matrix-vector products with
$\mG(\vtheta)$ can be computed using only products with the Jacobian and the
transpose Jacobian of $\mF$. These operations can be efficiently implemented via
automatic differentiation, using forward-mode autodiff for Jacobian-vector
products (JVPs) and backward-mode autodiff for vector-Jacobian products (VJPs).
Note that most autodiff frameworks include a stop-gradient function.

\begin{remark}
    \label{rmk:metrics_examples}
    \Cref{assumption:metric} encompasses standard metrics used in PDE-related
    function spaces. For instance, the $\rmL^2(\Omega)$ and $\rmH^1(\Omega)$
    metrics correspond to the choices $\calF_u v = v$ and $\calF_u v = (\nabla
        v, v)$, respectively. It also includes the Gauss-Newton metric on $
        \rmL^2(\Omega)$, for which matrix-free NGD implementations have been
    proposed in \cite{jnini_gauss-newton_2024,muller_position_2024} for specific
    problems. However, \Cref{assumption:metric} is significantly more general
    and covers a much broader class of metrics. In practice, it includes many of
    the metrics relevant in the context of PDEs, since these typically need to
    be expressible as integrals to be computationally tractable. For example, it
    includes the Newton metric $\bilinear[g_u]{v}{w} = \int_{\Omega} \nabla v
        \cdot \nabla w + 3 u^2 v w$ in \cite[Section 4.2]{muller_position_2024}, and
    the energy inner product for the Gross--Pitaevskii equation as considered in
    \cite{henning_sobolev_2020}, $\bilinear[g_u]{v}{w} = \int_{\Omega} \nabla v
        \cdot \nabla w + V v w + \beta \abs{u}^2 v w$.
\end{remark}

\subsubsection{A link with Operator Preconditioning and the case of Finite Elements Interpolated Neural Networks}
\label{sec:operator_preconditioning_and_feinns}
The Gramian \eqref{eq:natural_gradient} can be interpreted in the context of
operator preconditioning
\cite{hiptmair_operator_2006,mardal_preconditioning_2011,kirby_functional_2010,gunnel_note_2014}.
Specifically, NGD can be seen as a preconditioned gradient descent method, with
the Gramian serving as the preconditioner \cite{de_ryck_operator_2024}. For a
\emph{linear} parametrized model $u_{\vtheta} = \sum_{i=1}^p \theta_i
    \,\varphi_i$, the Gramian reduces to $\left[\mG(\vtheta)\right]_{ij} =
    \bilinear[g_{u_{\vtheta}}]{\varphi_i}{\varphi_j}$. If $(\rmV,
    \innerh[\rmV]{\cdot}{\cdot})$ is a Hilbert space and $g$ is induced by a positive
definite linear operator $\calG: \rmV \to \dual{\rmV}$, then
\[
    [\mG(\vtheta)]_{ij} = [\mG]_{ij} = \dualpair[\rmV]{\calG \varphi_i}{\varphi_j},
\]
which is independent of $\vtheta$. This coincides exactly with preconditioners
arising from operator preconditioning in Galerkin discretizations of PDEs
\cite{hiptmair_operator_2006}.

The connection with operator preconditioning highlights that the choice of
metric crucially influences the convergence speed of NGD. In general, the link
between NGD and Riemannian gradient descent \cite{muller_achieving_2023}
suggests selecting the metric $g$ to reduce the condition number of the
Riemannian Hessian. Further details are provided in \Cref{apx:sec:metric}, where
we present a unified framework for metric selection.

In the case of FEINNs, \Cref{thm:matfree} can be specialized to connect with
operator preconditioning for finite elements. Assume, for simplicity, that the
interior nodes of the mesh are the first $N_I$,
so that the FEINN can be written as $u_{\vtheta} = \bar{u}_h + \sum_{i=1}^{N_I}
    \NN(\vx_i) \varphi_i$. Then the Gramian \eqref{eq:natural_gradient} takes the form
\begin{equation}
    \label{eq:feinn_gramian}
    \mG(\vtheta) = \left(\mJ_{\vtheta}\mP(\vtheta)\right)^\top \mG_h \left(\mJ_{\vtheta}\mP(\vtheta)\right), \qquad \text{where} \qquad \mP(\vtheta) = \begin{bsmallmatrix}
        \NN(\vx_1)\\
        \vdots\\
        \NN(\vx_{N_I})
    \end{bsmallmatrix},
\end{equation}
where $\mG_h \in \R^{N_I \times N_I}$ with $[\mG_h]_{ij} = \dualpair[\rmV]{\calG
        \varphi_i}{\varphi_j}$ is the FEM operator preconditioner. Note that
computing matvecs with the Gramian requires only matvecs with $\mG_h$,
but not with its inverse, and JVPs and VJPs with $\mP(\vtheta)$, which
is an evaluation of the neural network at the interior nodes of the
mesh.

\section{A modular and scalable framework for Natural Gradient Descent}
\label{sec:modular_frameworks}
In this section, we present a framework for efficiently computing the Natural
Gradient Descent (NGD) direction. The framework is inherently modular: we break
the computation down into subtasks and propose several efficient methods for
each, allowing the optimal choice to be tailored to the specific problem at
hand. The computational cost estimates and the formulation of NGD in this
section are based on the following assumption.
\begin{assumption}
We consider the regime where a highly accurate quadrature rule is employed and the number of quadrature points $q$ exceeds the number of neural network parameters $p$.
\end{assumption}
When $q \ll p$ (for instance, if mini-batching is used at each
NGD iteration) techniques based on the Woodbury matrix identity can be employed
to reduce the computational cost \cite{guzman-cordero_improving_2025}.

To compute the natural gradient $\vd = \left(\mG(\vtheta) + \mu
    \mI\right)^{-1}\nabla_{\vtheta} L(\vtheta)$, a primary distinction lies in how
the linear system associated with the matrix $\left(\mG(\vtheta) + \mu
    \mI\right)$ is solved: directly or iteratively. When using a direct solver
approach, the required tasks at each step are:
\begin{enumerate}
    \item Assemble the full Gramian $\mG(\vtheta)$;
    \item Compute the regularization parameter $\mu > 0$;
    \item Solve the regularized linear system $\left(\mG(\vtheta) + \mu
              \mI\right)\vd = \nabla_{\vtheta} L(\vtheta)$ using a direct
          solver. Because the regularized Gramian is symmetric positive
          definite and dense, the preferred direct solver is the Cholesky
          decomposition followed by triangular solves.
\end{enumerate}
Alternatively, when employing an iterative approach, the solver of choice is
the preconditioned Conjugate Gradient (pCG) method. The corresponding steps are:
\begin{enumerate}
    \item Define a matrix-vector product function $\vv \to \mG(\vtheta) \,\vv$;
    \item Compute the regularization parameter $\mu > 0$;
    \item Define a suitable preconditioner (if any);
    \item Solve the regularized linear system $\left(\mG(\vtheta) + \mu
              \mI\right)\vd = \nabla_{\vtheta} L(\vtheta)$ using pCG.
\end{enumerate}
A schematic overview of the framework is provided in \Cref{fig:ngd_framework_tree}.

In the remainder of this section, we outline how to efficiently execute the
operations detailed in the two approaches above. We analyze them in terms of
both flops and memory footprint, trying to elucidate what is the most
appropriate technique based on the specific problem and available computational
resources.

\begin{figure}[htp]
    \centering
    \begin{tikzpicture}[
        node distance=0.7cm,
        box/.style={draw, rectangle, rounded corners, thick, align=center},
        root/.style={box, fill=gray!20, minimum width=6cm, inner ysep=1.5ex},
        ngd/.style={box, fill=blue!10, minimum width=6cm, inner ysep=1.5ex},
        category/.style={box, fill=green!10, text width=6cm, inner ysep=1.5ex},
        listbox/.style={box, text width=6cm, align=center, inner ysep=1.5ex},
        final/.style={box, fill=gray!20, minimum width=6cm, inner ysep=1.5ex}
    ]

    \node[root] (grad) {\textbf{Compute Euclidean Gradient} \\ $\vg = \nabla_{\vtheta} L(\vtheta)$};

    \node[ngd, below=0.6cm of grad] (ngddir) {\textbf{Compute Natural Gradient} \\ $\left(\mG(\vtheta) + \mu\mI\right)\vd = \vg$};

    \draw[->, thick] (grad.south) -- (ngddir.north);

    \node[category, below=0.8cm of ngddir, xshift=-3.5cm] (direct) {\textbf{Direct Solver} \\ (Dense $\mG$)};
    \node[category, below=0.8cm of ngddir, xshift=3.5cm] (iterative) {\textbf{Iterative Solver} \\ (Matrix-free pCG)};

    \draw[thick] (ngddir.south) -- ++(0,-0.4) coordinate (split);
    \draw[->, thick] (split) -| (direct.north);
    \draw[->, thick] (split) -| (iterative.north);


    \node[listbox, fill=orange!10, below=0.6cm of direct] (step1L) {
    \textbf{Assembly} \\{\scriptsize(\Cref{sec:full_gramian})}\\[1ex]
    \begin{tabular}{@{{\Large$\circ$}~}l@{}}
        Assemble full $\mG(\vtheta)$
    \end{tabular}
    };

    \draw[->, thick] (direct.south) -- (step1L.north);


    \node[listbox, fill=orange!10, below=0.6cm of iterative] (step1R) {
    \textbf{Matvec Strategy} \\{\scriptsize(\Cref{sec:matvecs_strategies})}\\[1ex]
    \begin{tabular}{@{{\Large$\circ$}~}l@{}}
        Precompute Gramian  \\
        Precompute Jacobian \\
        Batch-wise Jacobian
    \end{tabular}
    };

    \draw[->, thick] (iterative.south) -- (step1R.north);

    \path (step1R.south) ++(0,-0.6cm) coordinate (step2_top);

    \node[listbox, fill=red!10, anchor=north] (step2L) at (direct.center |- step2_top) {
        \textbf{Solve} \\[1ex]
        \begin{tabular}{@{{\Large$\circ$}~}l@{}}
            Cholesky solve
        \end{tabular}
    };

    \node[listbox, fill=red!10, anchor=north] (step2R) at (iterative.center |- step2_top) {
        \textbf{Preconditioner} \\[1ex]
        \begin{tabular}{@{{\Large$\circ$}~}l@{}}
            Randomized Nystr\"om {\scriptsize(\Cref{sec:nystrom})} \\
            RPCholesky  {\scriptsize(\Cref{sec:rpcholesky})}
        \end{tabular}
    };

    \draw[->, thick] (step1L.south) -- (step2L.north);
    \draw[->, thick] (step1R.south) -- (step2R.north);


    \path (step2R.south) ++(0,-0.5cm) coordinate (joinR);
    \path (step2L.south |- joinR) coordinate (joinL);

    \draw[thick] (step2L.south) -- (joinL);
    \draw[thick] (step2R.south) -- (joinR);
    \draw[thick] (joinL) -- (joinR) coordinate[midway] (joinM);

    \path (joinM) ++(0,-0.6cm) coordinate (final_pos);
    \node[final, anchor=north] (final) at (final_pos) {\textbf{Linesearch
        and Update} \\ {\scriptsize(\Cref{sec:putting_together})} \\ $\alpha =
        \mathtt{Linesearch}(\vtheta, -\vd)$ \\ $\vtheta \gets \vtheta - \alpha
        \vd$};
    \draw[->, thick] (joinM) -- (final.north);

\end{tikzpicture}
    \caption{Modular framework for Natural Gradient Descent. The computation of
        the natural gradient direction is abstracted into a solver routine, which
        can be implemented using either a direct solver (left branch) or an
        iterative matrix-free solver (right branch). The iterative solver further
        supports different strategies for computing matrix-vector products
        (matvecs) with the Gramian and various preconditioning techniques.
        Once the search direction is computed, a line search is performed to update
        the parameters.}
    \label{fig:ngd_framework_tree}
\end{figure}

\subsection{Assembly of the Gramian and matvecs}
\label{sec:assembly_and_matvecs}
For simplicity, we present the case where $k = 1$, i.e.,
$\calF_{u_{\mathtt{sg}(\vtheta)}} u_{\vtheta}(\vx)$ is a scalar. The vector
case follows in a completely analogous manner by summing the contributions of
the $k$ components.

In the subsequent complexity analyses, we let \fp\ denote the cost (flops or
memory, depending on the context) of a single forward pass
$\calF_{u_{\mathtt{sg}(\vtheta)}} u_{\vtheta}(\vx)$. We recall that when using
automatic differentiation (AD), the computational cost of each
Jacobian-Vector-Product (JVP) and Vector-Jacobian-Product (VJP) is a small
constant multiple of the forward pass cost \cite{griewank_evaluating_2008,margossian_review_2019}.
However, the memory requirements differ between reverse-mode AD (VJP) and
forward-mode AD (JVP). For a feedforward neural network, the cost of a forward
pass scales as $\calO(p)$, linearly with the number of network parameters. Note,
however, that when executed on modern GPUs, batch evaluation across $q$ points
benefits significantly from parallelization.

\subsubsection{Computing the full Gramian}
\label{sec:full_gramian}
Evaluating the Gramian as in \eqref{eq:G_matrixfree} requires computing
\begin{equation*}
    \mJ_{\vtheta}\vF(\vtheta) = \begin{bsmallmatrix}
        \nabla_{\vtheta}\calF_{u_{\mathtt{sg}(\vtheta)}} u_{\vtheta}(\vx_1)^\top\\
        \vdots\\
        \nabla_{\vtheta}\calF_{u_{\mathtt{sg}(\vtheta)}} u_{\vtheta}(\vx_q)^\top
    \end{bsmallmatrix} \in\R^{q \times p} \quad \text{where} \quad  \nabla_{\vtheta}\calF_{u_{\mathtt{sg}(\vtheta)}} u_{\vtheta}(\vx_i) \in\R^{p} \text{ for all } i=1,\dots, q.
\end{equation*}
Using reverse-mode AD, this operation requires $\calO(q\fp)$ flops
\cite{novak_fast_2022}, which is essentially equivalent to the cost of computing
the loss gradient $\nabla_{\vtheta}L(\vtheta)$. Subsequently, the tensor
contraction to form the Gramian via \eqref{eq:G_matrixfree} requires
$\calO(p^2q)$ flops. The memory footprint is approximately $\calO(pq)$,
dominated by the storage of the full Jacobian. If this exceeds available memory,
the quadrature points can be partitioned into $N_B$ batches of size $B$. The
Jacobian and the contraction can then be computed $B$ rows at a time, as
illustrated in \Cref{fig:jacobian_batch}. More formally, assuming $q = B\cdot
    N_B$, we rewrite \eqref{eq:G_matrixfree} as
\begin{equation}
    \label{eq:G_matrixfree_batch}
    \begin{split}
         & \mG(\vtheta) = \sum_{b=1}^{N_B} \left(\mJ_{\vtheta}\vF_b(\vtheta)\right)^\top \begin{bsmallmatrix}
                                                                                             w_{(b-1)B+1} \varphi_{u_{\vtheta}}(\vx_{(b-1)B+1}) & & \\
                                                                                             & \ddots & \\
                                                                                             & & w_{bB} \varphi_{u_{\vtheta}}(\vx_{bB})
                                                                                         \end{bsmallmatrix} \mJ_{\vtheta}\vF_b(\vtheta), \\
         & \vF_b(\vtheta) = \begin{bsmallmatrix}
                                \calF_{u_{\mathtt{sg}(\vtheta)}} u_{\vtheta}(\vx_{(b-1)B+1})\\
                                \vdots\\
                                \calF_{u_{\mathtt{sg}(\vtheta)}} u_{\vtheta}(\vx_{bB})
                            \end{bsmallmatrix},
    \end{split}
\end{equation}
and compute and add the contribution of each batch sequentially. This approach
restricts the memory requirement to $\calO(p^2+pB)$, while the overall FLOP
count remains largely unchanged, though the degree of parallelization is
constrained by the batch size.

\begin{figure}
    \includegraphics[width=\textwidth]{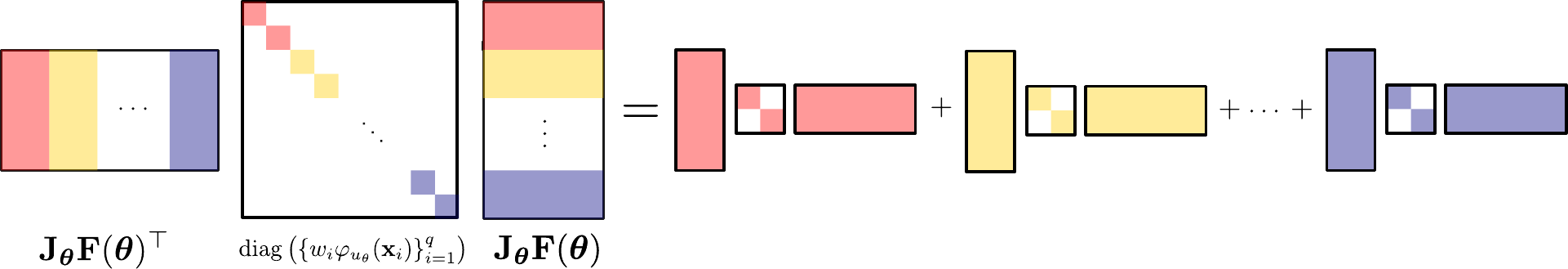}
    \caption{Computation of the Gramian in batches to reduce memory footprint. Each batch of $B$ points is represented by a distinct color. The Jacobian is evaluated for one batch of quadrature points at a time, and the corresponding contribution to the Gramian is computed and accumulated before proceeding to the next batch.}
    \label{fig:jacobian_batch}
\end{figure}

\subsubsection{Matvecs with the Gramian}
\label{sec:matvecs_strategies}
We now discuss strategies for computing matrix-vector products (matvecs) with the
Gramian $\mG(\vtheta)$. We evaluate these strategies based on three metrics:
setup cost (precomputing the auxiliary quantities required to define the
mapping $\vv\mapsto\mG(\vtheta)\vv$), matrix-matrix product (MMP) cost
(evaluating $\mG(\vtheta)\mV$ for $\mV\in\R^{p\times \ell}$), and memory footprint.
Note that the cost of a single matvec is simply the MMP cost evaluated at $\ell = 1$.

\paragraph{Precompute the Gramian}
The first strategy simply consists of assembling the full Gramian as detailed in
\Cref{sec:full_gramian} and subsequently performing MMPs directly. The setup
cost is $\calO(q\fp + p^2q)$, while each MMP requires $\calO(p^2\ell)$ flops. By
processing the quadrature points in batches of size $B$, the memory cost can be
bounded to $\calO(p^2+pB)$.

\paragraph{Precompute the Jacobian only}
A second strategy computes the Jacobian $\mJ_{\vtheta}\vF(\vtheta)$ but avoids
the explicit contraction required to assemble the Gramian. Instead, MMPs are
evaluated in a matrix-free fashion according to \eqref{eq:G_matrixfree}: one
sequentially multiplies by $\mJ_{\vtheta}\vF(\vtheta)$, the diagonal matrix
$\diag\left(w_1\varphi_{u_{\vtheta}}(\vx_1), \dots,
    w_q\varphi_{u_{\vtheta}}(\vx_q)\right)$, and finally $\mJ_{\vtheta}\vF(\vtheta)^\top$.
The setup requires computing $\mJ_{\vtheta}\vF(\vtheta)$ at a cost of
$\calO(q\fp)$, and each MMP costs $\calO(pq\ell)$. However, memory usage cannot
be reduced below $\calO(pq)$ because the entire Jacobian must be stored.

\paragraph{Batch-wise Jacobian recomputation on the fly}
A third strategy avoids precomputation entirely by evaluating the Jacobian
batch-wise and on the fly during each MMP computation. Following
\eqref{eq:G_matrixfree_batch}, for each batch $b = 1, \dots, N_B$, we evaluate
the partial Jacobian $\mJ_{\vtheta}\vF_b(\vtheta)$, compute its matrix-free
contribution to the MMP, and accumulate the result sequentially. This approach
is mathematically equivalent to performing Vector-Jacobian Products (VJPs) and
Jacobian-Vector Products (JVPs) on the batched maps
$\vtheta\mapsto\vF_b(\vtheta)\in\R^B$. The setup cost is strictly negligible
since no precomputation occurs. While each MMP requires
$\calO(q\fp + pq\ell)$ FLOPs, the memory footprint is substantially reduced to
$\calO(pB)$.

\bigskip
A summary of the computational and memory costs for each strategy is provided in
\Cref{tab:matvec_costs}. If memory is not a bottleneck and the number of
simultaneous matvecs $\ell$ is smaller than the parameter count $p$ (which is
typically the case), precomputing only the Jacobian is the most efficient choice
in terms of combined setup and MMP costs. Conversely, if storing the full
Jacobian exceeds available memory, recomputing it batch-wise and on the fly
for each MMP becomes the most viable alternative. For the sake of completeness, we
note that assembling the full Gramian can still be advantageous in the specific
edge case where out-of-memory errors are driven by an exceptionally large number
of quadrature points $q$, provided the parameter count $p$ is small enough that
the $\calO(p^2)$ storage requirement remains easily feasible.

\begin{table}[htp]
    \centering
    \caption{Summary of computational and memory costs for the three matrix-vector product strategies. Here $p$ is the number of parameters, $\fp$ is the cost of a forward pass, $\ell$ is the number of simultaneous matvecs, $q$ is the number of quadrature points, and $B$ is the batch size for processing quadrature points.}
    \label{tab:matvec_costs}
    \begin{tblr}{
            colspec = {lccc},
            row{even} = {gray!15}, 
            row{1} = {white},      
        }
        \toprule
        \textbf{Strategy}        & \textbf{Setup Cost}  & \textbf{MMP Cost} & \textbf{Memory Footprint} \\
        \midrule
        Precompute Gramian       & $\calO(q\fp + p^2q)$ & $\calO(p^2\ell)$  & $\calO(p^2+pB)$           \\
        Precompute Jacobian only & $\calO(q\fp)$        & $\calO(pq\ell)$   & $\calO(pq)$               \\
        {Recompute batch Jacobian                                                                       \\ on the fly} & -- & $\calO(q\fp + pq\ell)$ & $\calO(pB)$ \\
        \bottomrule
    \end{tblr}
\end{table}

\subsubsection{Computing the diagonal of the Gramian}
\label{sec:gram_diagonal}
Computing the diagonal of the Gramian $\mG(\vtheta)$ is often useful. While this
is trivial if the full Gramian has already been assembled, it can also be
computed efficiently without forming the full matrix by evaluating only the
diagonal entries of \eqref{eq:G_matrixfree}. Specifically, it holds that
\begin{equation}
    \label{eq:gram_diagonal}
    \diag\left(\mG(\vtheta)\right) = \sum_{i=1}^q w_i \, \varphi_{u_{\vtheta}}(\vx_i)\, \left(\nabla_{\vtheta}\calF_{u_{\mathtt{sg}(\vtheta)}} u_{\vtheta}(\vx_i)\right)^{\odot 2},
\end{equation}
where $^{\odot 2}$ denotes the element-wise square. The accumulation in
\eqref{eq:gram_diagonal} requires $\calO(pq)$ flops and can be straightforwardly
executed in batches, thereby reducing the memory footprint to $\calO(pB)$.

\subsubsection{The case of Finite Element Interpolated Neural Networks}
As described in \Cref{sec:feinns}, in FEINNs the neural network is interpolated
onto the finite element (FE) space, and integration is performed at the FE
level. Consequently, the Gramian assumes the form given in
\eqref{eq:feinn_gramian}. Therefore, in the computational cost analyses
presented above for assembling the Gramian and performing associated matvecs,
the number of quadrature points $q$ is replaced by the number of interior mesh
nodes $N_I$, and \fp\ denotes the cost of a single forward pass $\NN(\vx)$.
Furthermore, we must account for the cost of multiplying by the FEM operator
$\mG_h$. Letting $\mvgh = \mvgh(N_I)$ denote the cost of a matvec with $\mG_h$,
the additional cost to compute the full Gramian is $\calO(p\,\mvgh)$. For each
matrix-free MMP when only the full Jacobian is precomputed, the additional cost
is $\calO(\ell\,\mvgh)$.

Regarding batch processing, the FEM operator $\mG_h$ is typically very sparse
but not diagonal. Consequently, batching cannot be performed as trivially as
before, since the interactions between different rows must be addressed. Let
$\{I_b\}_{b=1}^{N_B}$ represent the index sets for the batches of interior
nodes. For each batch $b = 1, \dots, N_B$, we identify the set of interacting
indices, $J_b = \{j: [\mG_h]_{ij} \neq 0 \text{ for some } i \in I_b\}$, and
sequentially accumulate the batch contributions as follows:
\begin{equation}
    \label{eq:feinn_gramian_batch}
    \begin{split}
         & \mG(\vtheta) = \sum_{b=1}^{N_B} \left(\mJ_{\vtheta}\mP(\vtheta)[I_b, :]\right)^\top \mG_h[I_b, J_b] \left(\mJ_{\vtheta}\mP(\vtheta)[J_b, :]\right), \\
         & \mJ_{\vtheta}\mP(\vtheta)[I, :] = \begin{bsmallmatrix}
                                                 \nabla_{\vtheta}\NN(\vx_{i_1})^\top\\
                                                 \vdots\\
                                                 \nabla_{\vtheta}\NN(\vx_{i_{|I|}})^\top
                                             \end{bsmallmatrix}\in\R^{|I|\times p}, \qquad I = \{i_1, \dots, i_{|I|}\}.
    \end{split}
\end{equation}
To minimize redundant row computations in the Jacobian, it is advantageous to
group spatially proximate interior nodes into the same batch, ensuring that the
set of interacting indices $J_b$ remains comparable in size to $I_b$. By
defining $B = \max_b |J_b|$, the memory footprint is constrained to $\calO(pB)$.
The overall FLOP count is more complex to analyze, as it depends heavily on the
sparsity pattern of $\mG_h$ and the specific batching strategy; however, it is
expected to be on the order of $\calO(N_I\fp + pN_I\ell + \ell\, \mvgh)$ for MMPs,
and $\calO(p\,\mvgh)$ for computing the Gramian diagonal.

\subsection{Preconditioning matrix-free Natural Gradient Descent}
\label{sec:preconditioning}
When iteratively solving the regularized linear system $\left(\mG(\vtheta) + \mu
    \mI\right)\vd = \nabla_{\vtheta} L(\vtheta)$, the Conjugate Gradient (CG) method
is typically employed. In practice, however, $\mG = \mG(\vtheta)$ is often
ill-conditioned, leading to slow convergence unless a suitable preconditioner is
applied. Furthermore, $\mG$ lacks the sparsity or grid-based structure typical
of FEM and FD matrices, which are essential for classical preconditioners such
as Incomplete Cholesky, multigrid/multilevel methods, and domain decomposition
techniques. Instead, the main characteristics of $\mG$ are that it is symmetric
positive semidefinite, it typically exhibits rapid singular value decay (see
\Cref{fig:decay_sample}), and it allows for efficient matvecs and efficient
access to its diagonal. Given these properties, Randomized Nystr{\"o}m
\cite{tropp_fixed-rank_2017,frangella_randomized_2023} and Randomly Pivoted
Partial Cholesky (RPCholesky)
\cite{chen_randomly_2024,epperly_make_2025,epperly_embrace_2025,steinerberger_randomly_2024}
represent, to the best of our knowledge, the most effective methods currently
available for constructing a preconditioner. In this section, we review both
Randomized Nystr{\"o}m preconditioning and RPCholesky, explaining how to
integrate them into our framework to accelerate matrix-free NGD.

\subsubsection{Randomized Nystr{\"o}m}
\label{sec:nystrom}
Let $\mG \in \R^{p \times p}$ be a symmetric positive semidefinite matrix.
The \emph{randomized} Nystr{\"o}m approximation \cite[Section
    14]{martinsson_randomized_2020} with a random test matrix $\mOmega \in \R^{p
        \times \ell}$ is defined as
\begin{equation}
    \label{eq:nystrom_approx}
    \hat{\mG}_{\mathrm{nys}} \defas (\mG\mOmega)\left(\mOmega^\top\mG\mOmega\right)^{\dagger}(\mG\mOmega)^\top.
\end{equation}
The randomness in $\mOmega$ ensures that $\hat{\mG}_{\mathrm{nys}}$ approximates
$\mG$ well with high probability \cite[Section
    14.4]{martinsson_randomized_2020}. In this work, we choose $\mOmega$ to be
either a standard normal matrix (i.e., with i.i.d. standard Gaussian entries) or
a sampling matrix \cite{alaoui_fast_2015,bach_sharp_2013} (i.e., with columns
sampled uniformly at random from the standard basis). While other choices for
$\mOmega$ are possible, such as structured random matrices
\cite{avron_faster_2017,martinsson_randomized_2020,murray_randomized_2023,halko_finding_2011},
they are beyond the scope of this study. 
For a standard Gaussian test matrix, the randomized Nystr{\"o}m approximation
admits strong theoretical guarantees
\cite[Theorem 14.1]{martinsson_randomized_2020}. In particular, for any target
rank $k < \ell - 1$, it provides a good low-rank approximation of $\mG$ in
expectation, with error close to the optimal rank-$k$ error up to an additive
term depending on the eigenvalue tail, which decays as the sketch size
$\ell$ increases. Consequently, the Nystr{\"o}m approximation is especially
effective when $\mG$ exhibits strong spectral decay, as is the case in our
setting.

We emphasize that the formula \eqref{eq:nystrom_approx} is
numerically unstable and should not be used directly. A more robust and
efficient implementation, which also returns the eigendecomposition of
$\hat{\mG}_{\mathrm{nys}}$, is provided in \cite[Algorithm
    16]{martinsson_randomized_2020} and reported in
\Cref{apx:sec:randomized_nystrom}.

\paragraph{Randomized Nystr{\"o}m preconditioning}
Given an eigendecomposition $\hat{\mG}_{\mathrm{nys}} = \mU \hat{\mLambda}
    \mU^\top$ with $\hat{\mLambda} = \diag(\hat{\lambda}_1, \dots,
    \hat{\lambda}_\ell)$ and $\hat{\lambda}_1 \geq \dots \geq \hat{\lambda}_\ell$,
the randomized Nystr{\"o}m preconditioner \cite{frangella_randomized_2023} for
$(\mG + \mu \mI)$ with $\mu \geq 0$  is constructed as
\begin{equation}
    \label{eq:nystrom_inverse}
    \mP^{-1} = \mU(\hat{\mLambda} + \mu\mI)^{-1}\mU^\top + \frac{1}{\hat{\lambda}_{\ell} + \mu}(\mI - \mU\mU^\top).
\end{equation}
To ensure good performance in expectation, the sampling parameter $\ell$ should
be scaled proportionally to the effective dimension $d_{\mathrm{eff}}(\mu) =
    \sum_{i=1}^p \lambda_i(\mG) / (\lambda_i(\mG) + \mu)$. This metric acts as a
smoothed count of the eigenvalues of $\mG$ that exceed $\mu$ \cite[Theorem
    1.1]{frangella_randomized_2023}. When $\mG$ exhibits strong spectral decay,
$d_{\mathrm{eff}}(\mu) \ll p$, making the randomized Nystr{\"o}m preconditioner
particularly effective \cite[Theorem 5.1]{frangella_randomized_2023}.

\paragraph{Computational complexity}
Each application of the randomized Nystr{\"o}m preconditioner costs
$\calO(p\ell)$ flops. Beforehand, one must compute the Nystr{\"o}m approximation
itself. Let $\rmC_{\mathrm{mv}}(\ell)$ denote the cost of computing the MMP
$\mG(\vtheta)\mOmega$, which varies depending on the selected strategy from
\Cref{sec:matvecs_strategies}. The overall cost to construct the randomized
Nystr{\"o}m approximation is $\calO(\rmC_{\mathrm{mv}}(\ell) + p \ell^2)$ flops,
with a memory requirement of $\calO(p\ell)$. This overhead is in addition to the
specific setup costs and memory footprint of the chosen matvec strategy.

\subsubsection{Randomly Pivoted Partial Cholesky (RPCholesky)}
\label{sec:rpcholesky}
Randomly Pivoted Partial Cholesky (RPCholesky)
\cite{chen_randomly_2024,epperly_embrace_2025,epperly_make_2025} is a randomized
variant of the standard Pivoted Partial Cholesky algorithm
\cite{cox_analysis_1990}. It computes a rank-$\ell$ approximation $\mG \approx
    \hat{\mG}_{\mathrm{RPChol}} \defas \mF\mF^\top$, where $\mF\in\R^{p \times \ell}$, for a symmetric positive
semidefinite matrix $\mG\in\R^{p \times p}$. This is achieved by performing
$\ell$ steps of the pivoted Cholesky decomposition, where pivots are selected by
sampling proportionally to the diagonal entries of the residual matrix. Starting
with the initial residual $\mG^{(0)} = \mG$ and its diagonal $\vd^{(0)} =
    \diag(\mG^{(0)})$, RPCholesky iterates the following steps for $i = 1, \dots,
    \ell$:
\begin{enumerate}
    \item Sample the $i$-th pivot $s_i$ according to the current diagonal:
          \[
              s_i \sim \diag(\mG^{(i-1)}) = \vd^{(i-1)},
          \]
          where $s \sim \vd\in\R^p_+$ denotes the probability distribution $\mathbb{P}(s = j) = d_j /
              \sum_k d_k$.
    \item Extract and rescale the selected pivot column:
          \[
              \begin{split}
                  \vf^{(i)} & = \mG^{(i-1)}[:,s_i] = \mG[:,s_i] - \mF[:,1:i-1]\mF[s_i,1:i-1]^\top, \\
                  \mF[:, i] & = \vf^{(i)} / \sqrt{\vf^{(i)}_{s_i}}.
              \end{split}
          \]
    \item Update the residual matrix and its diagonal:
          \[
              \begin{split}
                  \mG^{(i)} & = \mG^{(i-1)} - \mF[:, i]\mF[:, i]^\top,          \\
                  \vd^{(i)} & = \vd^{(i-1)} - \left(\mF[:, i]\right)^{\odot 2}.
              \end{split}
          \]
\end{enumerate}
As the update equations demonstrate, the algorithm can be executed without ever
explicitly forming the full residual matrices $\mG^{(i)}$. Instead, it
incrementally builds the approximation factor $\mF$ one column at a time while
updating only the diagonal vector $\vd^{(i)}$. A block variant of the algorithm
selects $b > 1$ pivots simultaneously, performing the same operations on a batch
of $b$ columns at once. Detailed pseudocode for both the standard and block
versions is provided in \Cref{alg:rpcholesky,alg:block_rpcholesky}. Notably, the
approximation error in the trace norm (i.e., the Schatten 1-norm) can be easily
monitored, since
\begin{equation*}
    \norm{\mG - \mF[:, 1:i]\mF[:, 1:i]^\top}_{*} = \trace(\mG^{(i)}) = \sum_{j=1}^p \vd^{(i)}_j.
\end{equation*}
This allows the decomposition to be terminated early once a desired accuracy is
reached.

RPCholesky is mathematically equivalent to the Nystr{\"o}m approximation when
the sampling matrix is constructed from the selected pivots \cite[Property
    2.1]{chen_randomly_2024}. In RPCholesky, however, the indices are sampled
adaptively based on the residual at the current iteration. Trace-norm error bounds show that this adaptive approximation is particularly
effective when $\mG$ exhibits strong spectral decay, as is the case in our
setting \cite[Theorem 4.2]{epperly_embrace_2025}. 

\paragraph{RPCholesky preconditioning}
Once the RPCholesky approximation $\hat{\mG}_{\mathrm{RPChol}} = \mF\mF^\top$
has been constructed, the corresponding RPCholesky preconditioner
\cite{diaz_robust_2025} for the regularized matrix $(\mG + \mu\mI)$ with $\mu >
    0$ is defined as
\begin{equation*}
    \mP = \hat{\mG}_{\mathrm{RPChol}} + \mu \mI = \mF\mF^\top + \mu\mI.
\end{equation*}
Note that $\mP$ can be efficiently inverted by computing a thin singular value
decomposition (SVD) $\mF = \mU\mSigma\mV^\top$ as
\begin{equation}
    \label{eq:rpchol_svdprecond}
    \mP^{-1} = \frac{1}{\mu} (\mI - \mU\mU^\top) + \mU(\mSigma^2 + \mu\mI)^{-1}\mU^\top.
\end{equation}
Alternatively, one can compute the $\ell\times\ell$ Cholesky decomposition
$\mF^\top\mF + \mu \mI_{\ell} = \mR^\top\mR$ and use it to efficiently invert
$\mP$ via the Woodbury matrix identity:
\begin{equation}
    \label{eq:rpchol_precond}
    \mP^{-1} = \frac{1}{\mu}\mI - \frac{1}{\mu}\mF\mR^{-1}\mR^{-\top}\mF^\top.
\end{equation}
Although this second alternative is theoretically less numerically stable, we
did not encounter stability issues in our experiments. Because it is
significantly faster on GPUs than the SVD-based approach, we adopt this second
formulation.

The performance of the RPCholesky preconditioner is governed by the
\emph{$\mu$-tail rank} of $\mG$, defined as $\rank_\mu(\mG) \defas \min\left\{r
    \geq 0 : \sum_{i > r} \lambda_i(\mG) \leq \mu\right\}$. When $\mG$ exhibits
rapid spectral decay, we have $\rank_\mu(\mG) \ll p$, making RPCholesky
preconditioning particularly efficient \cite[Theorem 2.2]{diaz_robust_2025}.

\paragraph{Computational complexity}
Each application of the RPCholesky preconditioner requires $\calO(p\ell)$ flops.
However, one must also account for the initial cost of computing the RPCholesky
approximation. Let $\rmC_{\mathrm{col}}(b)$ denote the cost of extracting $b$
columns of the matrix $\mG(\vtheta)$. This operation is essentially equivalent
to a matrix-matrix product (MMP) with a $p \times b$ matrix, and its cost varies
depending on the strategy selected from \Cref{sec:matvecs_strategies}. Furthermore,
let $\rmC_{\mathrm{diag}}$ denote the cost of computing the diagonal of $\mG$,
which is necessary for pivot selection and can be evaluated efficiently as
detailed in \Cref{sec:gram_diagonal}. The total cost to construct an RPCholesky
approximation of rank $\ell = tb$ is thus $\calO\left(t\rmC_{\mathrm{col}}(b) +
    \rmC_{\mathrm{diag}} + p\ell^2\right)$ flops, with a memory footprint of
$\calO(p\ell)$. This overhead is in addition to the specific setup costs and
memory requirements of the chosen matvec strategy. Notably, block processing
yields speedups both in column extraction (by evaluating MMPs in batches) and
in the execution of the algorithm itself (by casting computations as efficient
matrix-matrix operations).

\subsection{Computing the regularization parameter \texorpdfstring{$\mu$}{mu}}
\label{sec:regularization_mu}
The regularization parameter $\mu > 0$ is crucial for ensuring numerical
stability and mitigating the slow initial convergence typical of
quasi-second-order methods like NGD. Prior works have proposed heuristics that
adapt $\mu$ dynamically
\cite{bonfanti_challenges_2024,gavin_levenberg-marquardt_2019} or decrease it
proportionally with the loss \cite{zeinhofer_unified_2024}, allowing early
iterations to mimic gradient descent before gradually shifting to NGD near a
local minimum. We instead propose a heuristic based on the numerical rank of
$\mG(\vtheta)$.

Let $\lambda_1 \geq \dots \geq \lambda_p$ denote the eigenvalues of the Gramian
$\mG(\vtheta)$. We aim at setting
the regularization parameter to an approximation of $\mu \approx \gamma \cdot
    \epsilon_{\mathrm{mach}} \cdot \lambda_1$, with $\gamma = 10$ as a reasonable
default. This corresponds to the threshold commonly employed to determine the
numerical rank, i.e., the cut-off below which eigenvalues are considered to be
numerically zero \cite{press_numerical_2007}. Although the exact
largest eigenvalue $\lambda_1$ is typically not directly available, an estimate
$\hat{\lambda}_1$ is often readily accessible, for instance, from the
Nystr{\"o}m approximation. Alternatively, it can be efficiently computed using a
few iterations of the power method (we found 2 to 4 iterations to be
sufficient). Note that more advanced strategies for the scheduling of $\mu$, based on the
  knowledge of all eigenvalues of the Gramian have recently appeared in the
  literature \cite{Schwencke2025}. The inclusion of those strategies in
  the framework considered in this paper is a direction of future work.

Finally, we observed that it can be beneficial to enforce stronger
regularization in early iterations, when the optimization is still far from a
local minimum. This can be achieved using a heuristic of the form $\mu =
    \max(\gamma \cdot \epsilon_{\mathrm{mach}} \cdot \hat{\lambda}_1, c \cdot
    \norm{\nabla L(\vtheta)}^{\alpha})$ or $\mu = \max(\gamma \cdot
    \epsilon_{\mathrm{mach}} \cdot \hat{\lambda}_1, c \cdot |L(\vtheta)|^{\alpha})$,
where the scaling factor $c$ and the exponent $\alpha$ are problem-dependent.

\subsection{Putting it all together: Preconditioned Matrix-Free NGD Algorithms}
\label{sec:putting_together}
We are finally ready to combine the ingredients described so far
and introduce
the two primary algorithmic contributions of this paper: matrix-free NGD with
randomized Nystr{\"o}m preconditioning (Nystr{\"o}mNGD) and matrix-free NGD with
RPCholesky preconditioning (RPCholNGD). To emphasize the modularity of our
framework, we first formulate Natural Gradient Descent as a meta-algorithm
(\Cref{alg:ngd_framework}) that delegates the computation of the descent
direction to a specific solver routine. This allows us to seamlessly swap
between a direct solver based on the full Gramian (\Cref{alg:dir_full_gramian})
and a matrix-free iterative solver (\Cref{alg:dir_matrix_free}). The matrix-free
solver, in turn, accepts a preconditioner builder, enabling the construction of
Nystr{\"o}mNGD (\Cref{alg:prec_nystrom}) and RPCholNGD (\Cref{alg:prec_rpchol})
simply by exchanging the preconditioning strategy.

A key advantage of Nystr{\"o}mNGD and RPCholNGD over standard NGD is their
reduced and tunable memory footprint. Excluding the memory required for the
matvec function, which depends on the strategy chosen from
\Cref{sec:matvecs_strategies} but can be bounded to $\calO(pB)$ using a batch
size $B$, the preconditioner requires storing at most $2\ell$ vectors of length
$p$. This footprint is comparable to the memory complexity of L-BFGS with a
history size of $\ell$. Consequently, the parameter $\ell_{\max}$ provides
explicit control over memory consumption, allowing the method to scale to larger
networks. Naturally, smaller values of $\ell_{\max}$ may result in a weaker
preconditioner and a higher number of inner pCG iterations, but the algorithm
remains fully operational.

\paragraph{Algorithmic details}
In the general NGD meta-algorithm, once the search direction $\vd_k$ is computed,
a line search is performed to update the iterate (lines~\ref{alg_line:linesearch}
and~\ref{alg_line:update}). In practice, we found that a standard Armijo
backtracking line search \cite{wright_numerical_2006} suffices, bypassing the
need for logarithmic grid searches as proposed in
\cite{muller_achieving_2023,jnini_gauss-newton_2024}.

\paragraph{Nystr{\"o}mNGD: Matrix-free NGD with randomized Nystr{\"o}m
    preconditioning} In Nystr{\"o}mNGD (\Cref{alg:prec_nystrom}), the randomized
Nystr{\"o}m approximation already yields an estimate $\hat{\lambda}_1$ of the
largest eigenvalue of $\mG(\vtheta)$, which we use to compute the
regularization parameter $\mu$. Furthermore, the rank parameter $\ell$ dictates
the computational cost of the preconditioner and should ideally scale with the
effective dimension $d_{\mathrm{eff}}(\mu)$ \cite[Theorem 5.1]{frangella_randomized_2023}.
Since $d_{\mathrm{eff}}(\mu)$ is not directly accessible, we adaptively
estimate $\ell$. Following \cite[Section 5.4.2]{frangella_randomized_2023},
we achieve this by monitoring the ratio $\hat{\lambda}_\ell / \mu$: if it
exceeds $10$, we increase $\ell$ up to a prescribed maximum $\ell_{\max}$
(line~\ref{alg_line:increase_ell_nystrom}). Otherwise, we reduce $\ell$ to
the smallest index satisfying $\hat{\lambda}_\ell / \mu < 10$, adding a
small offset (e.g., $1$ or $2$) to prevent oscillations
(line~\ref{alg_line:decrease_ell_nystrom}).

\paragraph{RPCholNGD: Matrix-free NGD with RPCholesky preconditioning}
In RPCholNGD (\Cref{alg:prec_rpchol}), the RPCholesky decomposition does not
naturally provide an approximation of the largest eigenvalue of $\mG(\vtheta)$.
We therefore estimate it using the power method.
Additionally, the rank parameter $\ell$ is determined adaptively by terminating
the RPCholesky factorization once the nuclear norm error falls below the threshold
$\varepsilon = \norm{\mu\mI}_* = \mu\cdot p$.

\begin{algorithm}[htp]
    \begin{algorithmic}[1]
        \Require{Initial parameter $\vtheta_0 \in \R^p$, maximum iterations $K$, solver routine $\mathtt{ComputeDirection}$, initial solver state $\mathcal{S}_0$}
        \Ensure{Sequence of iterates $\vtheta_1, \dots, \vtheta_K$}
        \vspace{0.5pc}
        \For{$k = 0, 1, \dots, K-1$}
        \State $\vg_k = \nabla_{\vtheta} L(\vtheta_k)$ \Comment{Compute Euclidean gradient}
        \State $[\vd_k, \mathcal{S}_{k+1}] = \mathtt{ComputeDirection}(\vtheta_k, \vg_k, \mathcal{S}_k)$ \Comment{Compute NGD direction and update state}
        \State $\alpha_{k} = \mathtt{Linesearch}(\vtheta_k, -\vd_k)$
        \Comment{Linesearch along $\alpha \mapsto \vtheta_k - \alpha \vd_k$} \label{alg_line:linesearch}
        \State $\vtheta_{k+1} = \vtheta_{k} - \alpha_k \vd_k$ \Comment{Update
            parameters} \label{alg_line:update}
        \EndFor
    \end{algorithmic}
    \caption{Modular Natural Gradient Descent}
    \label{alg:ngd_framework}
\end{algorithm}

\begin{algorithm}[htp]
    \begin{algorithmic}[1]
        \Require{Current parameter $\vtheta$, gradient $\vg$, $\gamma \in\R_+$ (e.g., $\gamma = 10$)}
        \Ensure{NGD direction $\vd$, empty state $\emptyset$}
        \vspace{0.5pc}
        \State Assemble full Gramian $\mG(\vtheta)$ \Comment{\Cref{sec:full_gramian}}
        \State Estimate $\lambda_1 \left(\mG(\vtheta)\right) \approx \hat{\lambda}_1$ via power method (e.g., 2--4 iterations)
        \State $\mu = \gamma \cdot \hat{\lambda}_1 \cdot \epsilon_{\mathrm{mach}}$ \Comment{\Cref{sec:regularization_mu}}
        \State Compute Cholesky decomposition $\mG(\vtheta) + \mu\mI = \mR^\top\mR $
        \State Solve $\vd = \mR^{-1}\mR^{-\top}\vg$ \Comment{Forward and backward substitution}
        \State \Return $[\vd, \emptyset]$
    \end{algorithmic}
    \caption{$\mathtt{ComputeDirection}$: Direct Full Gramian Solver}
    \label{alg:dir_full_gramian}
\end{algorithm}

\begin{algorithm}[htp]
    \begin{algorithmic}[1]
        \Require{Current parameter $\vtheta$, gradient $\vg$, state $\mathcal{S}$, builder $\mathtt{BuildPreconditioner}$, CG parameters $\mathtt{maxit}$}
        \Ensure{NGD direction $\vd$, updated state $\mathcal{S}_{\mathrm{next}}$}
        \vspace{0.5pc}
        \State Define $\mG_{\mathrm{fun}} = \vv \mapsto \mG(\vtheta) \vv$ via
        \eqref{eq:G_matrixfree} \Comment{Matvecs with Gramian, see
            \Cref{sec:matvecs_strategies}}
        \State $[\mP_{\mathrm{fun}}, \mu, \mathcal{S}_{\mathrm{next}}] = \mathtt{BuildPreconditioner}(\mG_{\mathrm{fun}}, \vtheta, \vg, \mathcal{S})$ \Comment{Build preconditioner and compute $\mu$}
        \State $\vd = \texttt{pCG}(\mG_{\mathrm{fun}} + \mu \cdot \mathrm{Id},\,
            \vg,\, \mathtt{maxit},\, \mP_{\mathrm{fun}})$
        \Comment{Solve iteratively for $\vd$}
        \State \Return $[\vd, \mathcal{S}_{\mathrm{next}}]$
    \end{algorithmic}
    \caption{$\mathtt{ComputeDirection}$: Preconditioned Matrix-Free Solver}
    \label{alg:dir_matrix_free}
\end{algorithm}

\begin{algorithm}[htp]
    \begin{algorithmic}[1]
        \Require{Matvec function $\mG_{\mathrm{fun}}$, current parameter
            $\vtheta$, gradient $\vg$, state $\mathcal{S} = \{\ell\}$, $\gamma \in
                \R_+$ (e.g., $\gamma = 10$), maximum rank $\ell_{\max}$, rank increase $\delta\ell$}
        \Ensure{Matrix-free preconditioner $\mP_{\mathrm{fun}}$, regularization parameter $\mu$, updated state $\{\ell_{\mathrm{next}}\}$}
        \vspace{0.5pc}
        \State $[\mU, \hat{\mLambda}] =
            \mathtt{RandomizedNystrom}(\mG_{\mathrm{fun}}, \ell)$
        \Comment{Randomized Nystr{\"o}m approximation,
            see \Cref{apx:sec:randomized_nystrom}}
        \State $\mu = \gamma \cdot \hat{\lambda}_1 \cdot \epsilon_{\mathrm{mach}}$ \Comment{\Cref{sec:regularization_mu}}
        \State $\mP_{\mathrm{fun}} = \vv \mapsto (\hat{\lambda}_{\ell} +
            \mu) \mU(\hat{\mLambda} + \mu\mI)^{-1}\mU^\top \vv + (\mI -
            \mU\mU^\top) \vv$ \Comment{Nystr{\"o}m preconditioner \eqref{eq:nystrom_inverse}}
        \If{$\hat{\lambda}_\ell > 10 \mu$}
        \State $\ell_{\mathrm{next}} = \min(\ell + \delta\ell,
            \ell_{\max})$ \Comment{Increase rank if smallest eigenvalue is too
            large} \label{alg_line:increase_ell_nystrom}
        \Else
        \State $\ell_{\mathrm{next}} = \min\left( \min\{i : \hat{\lambda}_i
            < 10 \mu\} + 1,\, \ell_{\max}\right)$ \Comment{Decrease rank if
            decay is reached early} \label{alg_line:decrease_ell_nystrom}
        \EndIf
        \State \Return $[\mP_{\mathrm{fun}}, \mu, \{\ell_{\mathrm{next}}\}]$
    \end{algorithmic}
    \caption{$\mathtt{BuildPreconditioner}$: Nystr{\"o}m Preconditioning (Nystr{\"o}mNGD)}
    \label{alg:prec_nystrom}
\end{algorithm}

\begin{algorithm}[htp]
    \begin{algorithmic}[1]
        \Require{Matvec function $\mG_{\mathrm{fun}}$, current parameter $\vtheta$, gradient $\vg$, state $\mathcal{S} = \emptyset$, $\gamma \in \R_+$ (e.g., $\gamma = 10$), maximum rank $\ell_{\max}$}
        \Ensure{Matrix-free preconditioner $\mP_{\mathrm{fun}}$, regularization parameter $\mu$, empty state $\emptyset$}
        \vspace{0.5pc}
        \State Compute diagonal $\vd^{(0)} = \diag(\mG(\vtheta))$
        \Comment{\Cref{sec:gram_diagonal}}
        \State Estimate $\lambda_1 \left(\mG(\vtheta)\right) \approx \hat{\lambda}_1$ via power method (e.g., 2--4 iterations)
        \State $\mu = \gamma \cdot \hat{\lambda}_1 \cdot \epsilon_{\mathrm{mach}}$ \Comment{\Cref{sec:regularization_mu}}
        \State $\mF = \mathtt{RPCholesky}(\mG_{\mathrm{fun}},\,\vd^{(0)},\, \ell_{\max},\,
        \varepsilon = \mu\cdot p)$ \Comment{RPCholesky approximation, see
        \Cref{apx:sec:rpchol}}
        \State Compute Cholesky decomposition $\mF^\top\mF + \mu\mI_\ell = \mR^\top\mR$
        \State $\mP_{\mathrm{fun}} = \vv \mapsto \mu^{-1}\vv - \mu^{-2}\mF\mR^{-1}\mR^{-\top}\mF^\top \vv$ \Comment{RPCholesky preconditioner \eqref{eq:rpchol_precond}}
        \State \Return $[\mP_{\mathrm{fun}}, \mu, \emptyset]$
    \end{algorithmic}
    \caption{$\mathtt{BuildPreconditioner}$: RPCholesky Preconditioning (RPCholNGD)}
    \label{alg:prec_rpchol}
\end{algorithm}

\section{Numerical Experiments}
\label{sec:numerical_experiments}
We evaluate the performance of Nystr{\"o}mNGD and RPCholNGD on a diverse set of
PDE problems with known analytical solutions. Our benchmarks encompass both
strong formulations, discretized via Physics-Informed Neural Networks (PINNs),
and weak formulations, discretized using Finite Element Interpolated Neural
Networks (FEINNs). The complete codebase to reproduce the results will be made
publicly available upon publication at
\url{https://github.com/IvanBioli/preconditioned-natural-gradient.git}, and is
currently available upon reasonable request.

\paragraph{Evaluated methods and baselines}
In \Cref{sec:comparison_ngd}, we compare several algorithmic configurations within our modular NGD framework
(legend names in parentheses): NGD with a direct solver (NGD full); NGD with a
matrix-free solver and no preconditioning (NGD matrix-free no prec.);
Nystr{\"o}mNGD using a Gaussian random test matrix (Nystr{\"o}mNGD Gaussian);
Nystr{\"o}mNGD with a random sampling test matrix with sampling probability
proportional to $\diag(\mG(\vtheta))$ (Nystr{\"o}mNGD Sampling); and finally,
RPCholNGD.

Furthermore, in \Cref{sec:comparison_others}, we benchmark our preconditioned matrix-free NGD methods against state-of-the-art first- and quasi-second-order optimizers commonly employed in the literature. We include SSBroyden due to its recently reported exceptional performance on PINNs \cite{urban_unveiling_2025,kiyani_optimizing_2025}. We also evaluate Adam \cite{kingma_adam_2017}, BFGS \cite{wright_numerical_2006}, and L-BFGS \cite{wright_numerical_2006}, as they are the most standard first-order, quasi-Newton, and limited-memory quasi-Newton algorithms for PINNs, respectively. Additional optimizers, such as Shampoo \cite{gupta_shampoo_2018,anil2021scalable} and Muon \cite{jordan2024muon}, were also tested but are omitted from the final comparison as they did not yield improvements over SSBroyden. Because the
per-iteration computational cost varies significantly across these methods, we
evaluate performance based on both iteration count and wall-clock time.

All experiments are conducted in double precision on a single NVIDIA RTX 4090
GPU (24 GB), with results averaged over 10 independent runs. In the resulting
plots, solid lines indicate medians while shaded regions represent the
interquartile range (first to third quantiles).

\paragraph{General setup and hyperparameters}
Unless specified otherwise, experiments employ a fully connected neural network
with three hidden layers of width 64 and $\tanh$ activations. In all NGD variants, we compute the full Jacobian without batching, and for matrix-free implementations, we employ the ``Precompute Jacobian only'' strategy (see \Cref{sec:matvecs_strategies}).
For the preconditioned NGD methods, Nystr{\"o}mNGD and RPCholNGD, we cap the
maximum preconditioner rank at $\ell_{\max} = 500$ and run pcG for a maximum of
$\mathtt{maxit} = 20$ iterations. Within Nystr{\"o}mNGD, the rank increase
parameter is fixed to $\delta\ell = 20$. In RPCholNGD, we use the block variant
of RPCholesky with a block size of $b=20$. To ensure a fair comparison across
methods, the history size for L-BFGS is set to $500$ to match the memory
footprint of the preconditioned NGD methods. Similarly, for the
un-preconditioned matrix-free NGD, we allocate a maximum of $520$ pCG
iterations, matching the total maximum number of matrix-vector products
(matvecs) permitted in the preconditioned variants.

To enforce stronger regularization during the early stages of optimization, if not stated otherwise, all
NGD methods utilize the adaptive heuristic  $\mu = \max(\gamma \cdot
    \hat{\lambda}_1 \cdot \epsilon_{\mathrm{mach}}, 10^{-4} \cdot L(\vtheta)^2)$.
The largest eigenvalue estimate, $\hat{\lambda}_1$, is computed using four
iterations of the power method, except in Nystr{\"o}mNGD where the estimate is
naturally provided by the Nystr{\"o}m approximation. The parameter $\gamma$ is
set to $\gamma = 10$.

A comprehensive description of the problem-specific configurations, including
neural architectures, spatial discretizations, iteration budgets, and baseline
optimizer hyperparmeters, as well as an extended sensitivity analysis of
Nystr{\"o}mNGD and RPCholNGD, are provided in \Cref{apx:sec:numerical_details}
and \Cref{apx:sec:sensitivity}.

\subsection{Comparison of Natural Gradient Descent Variants}
\label{sec:comparison_ngd}
\subsubsection{Poisson problem in 3D -- PINNs}
We consider the Poisson problem on the 3D unit cube
\cite{raissi_physics-informed_2019,cuomo_scientific_2022}
\begin{equation}
    \label{eq:poisson3d}
    \begin{cases}
        -\Delta u = f & \Omega = (0, 1)^3, \\
        u = g_D       & \partial\Omega,
    \end{cases}
\end{equation}
with exact solution $u(x,y,z) = \sin(\pi x)\sin(\pi y)\sin(\pi z)$,  discretized
via Physics-Informed Neural Networks (PINNs) as in
\cite{zeinhofer_unified_2024}. Problem~\eqref{eq:poisson3d} is linear and is
considered in strong form. Using the notation of
\Cref{sec:discretization_of_PDEs}, the function spaces are $\rmV =
    \rmH^2(\Omega)$, $\rmX = \rmL^2(\Omega)$, and $\rmY = \rmL^2(\partial\Omega)$,
with operators
\[
    \calF(u) = -\Delta u, \qquad \calB(u) = \restr{u}{\partial\Omega}.
\]
The corresponding least-squares loss is
\begin{equation}
    \label{eq:energy_poisson}
    E(u) = \frac{1}{2}\norm{\Delta u + f}_{\rmL^2(\Omega)}^2 + \frac{1}{2}\norm{u - g_D}_{\rmL^2(\partial\Omega)}^2,
\end{equation}
and the induced metric reads
\begin{equation}
    \label{eq:metric_poisson}
    \bilinear[g]{u}{v} = \innerh[\rmL^2(\Omega)]{\Delta u}{\Delta v} + \innerh[\rmL^2(\partial\Omega)]{u}{v}
    = \int_{\Omega}\Delta u\,\Delta v\,\mathrm{d}\vx + \int_{\partial\Omega}u\,v\,\mathrm{d}\vs.
\end{equation}

\begin{remark}
    \label{rmk:coercivity_poisson}
    The metric \eqref{eq:metric_poisson} is coercive only in the
    $\rmH^{1/2}(\Omega)$ norm, but not in $\rmH^2(\Omega)$. This is due to the
    use of the $\rmL^2(\partial\Omega)$ norm for the Dirichlet boundary term.
    Coercivity in $\rmH^2(\Omega)$ can be restored by enforcing the boundary
    conditions strongly \cite[Remark~6]{zeinhofer_unified_2024} or by penalizing
    in the $\rmH^{3/2}(\partial\Omega)$ norm \cite{bonito_convergence_2025}.
    However, in line with standard PINN practice, we retain the
    $\rmL^2(\partial\Omega)$ penalty for simplicity.
\end{remark}

\paragraph{Results}
As shown in \Cref{fig:poisson_h1_iter,fig:poisson_h1_time}, NGD with a direct solver (NGD full), both variants of Nystr{\"o}mNGD, and RPCholNGD achieve similar accuracy in a comparable number of iterations; however, the matrix-free preconditioned NGD variants do so in significantly less time. Conversely, unpreconditioned matrix-free NGD fails to reach the accuracy of the preconditioned variants, despite being allocated the same budget of Gramian matvecs. This underscores the crucial role of preconditioning: given a fixed matvec budget, it is more effective to allocate the majority of it toward constructing a robust preconditioner. Without preconditioning, the linear system is solved inaccurately, yielding a poor approximation of the natural gradient descent direction and ultimately degrading the final accuracy. Furthermore, per iteration, the preconditioned NGD variants are significantly faster than unpreconditioned matrix-free NGD. This speedup occurs because assembling the preconditioner allows for the parallel execution of multiple matvecs with the Gramian , unlike the sequential matvecs required by CG. Finally, \Cref{fig:poisson_ell_iter} shows that the rank parameter $\ell$ increases rapidly during early iterations, consistent with \Cref{fig:poisson_svals_decay}, before stabilizing near the maximum allowed rank $\ell_{\max} = 500$. Note, however, that despite reaching this maximum rank, the method still achieves a compression factor of approximately $17\times$ relative to the total number of parameters $p = 8641$.

\begin{figure}[htb]
    \centering
    \begin{subfigure}[t]{0.49\textwidth}
        \includegraphics[width=\textwidth]{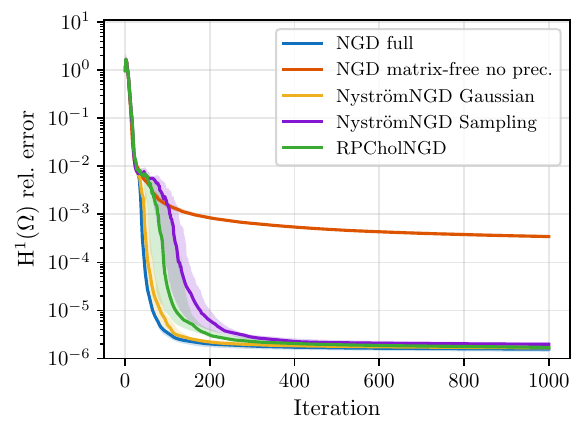}
        \caption{Relative $\rmH^1(\Omega)$ error vs.\ Iterations.}
        \label{fig:poisson_h1_iter}
    \end{subfigure}
    \hfill
    \begin{subfigure}[t]{0.49\textwidth}
        \includegraphics[width=\textwidth]{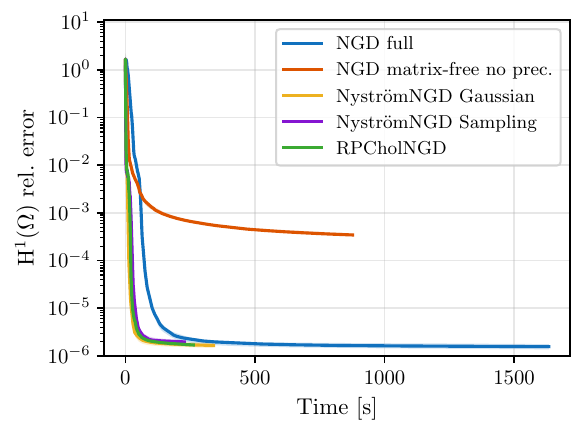}
        \caption{Relative $\rmH^1(\Omega)$ error vs.\ wall-clock time.}
        \label{fig:poisson_h1_time}
    \end{subfigure}
    \hfill
    \begin{subfigure}[t]{0.49\textwidth}
        \includegraphics[width=\textwidth]{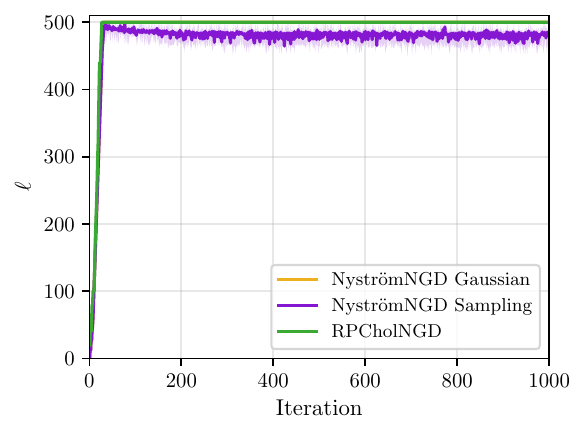}
        \caption{Nystr{\"o}m rank parameter $\ell$ vs.\ iterations, with
            $\ell_{\max} = 500$.}
        \label{fig:poisson_ell_iter}
    \end{subfigure}
    \hfill
    \begin{subfigure}[t]{0.49\textwidth}
        \includegraphics[width=\textwidth]{figures_new/natgradnew/PINNs/poisson3D_layers3x64x64x64x1/_svals_rel_singleiters.pdf}
        \caption{Decay of the normalized singular values
        $\{\sigma_i/\sigma_1\}_{i=1}^p$ of the Gramian during the first 250
        iterations of training using NGD with direct solver, where each color
        represents a different iteration.}
        \label{fig:poisson_svals_decay}
    \end{subfigure}
    \caption{Convergence for the 3D Poisson equation discretized via PINNs.}
    \label{fig:main-error-poisson}
\end{figure}

\subsubsection{Heat equation in 3+1D -- PINNs}
We consider the heat equation
\begin{equation}
    \label{eq:heat}
    \begin{cases}
        \partial_t u - \Delta u = f & \text{in } I \times \Omega = (0,1) \times (0,1)^3, \\
        u = g_D                     & \text{on } I \times \partial\Omega,                \\
        u(0, \cdot) = u_0           & \text{in } \Omega,
    \end{cases}
\end{equation}
with exact solution $u(t,x,y,z) = \big(\cos(\pi x) + \cos(\pi y) + \cos(\pi
        z)\big) e^{-\pi^2 t / 4}$, as in \cite{zeinhofer_unified_2024}. We consider
\eqref{eq:heat} in strong form discretized via PINNs, with function spaces $\rmV = \rmL^2(I,
    \rmH^2(\Omega)) \cap \rmH^1(I, \rmL^2(\Omega))$, $\rmX = \rmL^2(I,
    \rmL^2(\Omega))$, $\rmY = \rmL^2(I, \rmL^2(\partial\Omega)) \times
    \rmL^2(\Omega)$, and operators
\[
    \calF(u) = \partial_t u - \Delta u,
    \qquad
    \calB(u) = \big(\restr{u}{I \times \partial\Omega},\, u(0, \cdot)\big).
\]
The corresponding least-squares loss reads
\begin{equation}
    \label{eq:heat_energy}
    E(u) = \frac{1}{2}\norm{\partial_t u - \Delta u - f}_{\rmL^2(I, \rmL^2(\Omega))}^2
    + \frac{1}{2}\norm{u - g_D}_{\rmL^2(I, \rmL^2(\partial\Omega))}^2
    + \frac{1}{2}\norm{u(0, \cdot) - u_0}_{\rmL^2(\Omega)}^2,
\end{equation}
with induced metric
\begin{equation}
    \label{eq:heat_metric}
    \bilinear[g]{u}{v}
    = \innerh[\rmL^2(I, \rmL^2(\Omega))]{\partial_t u - \Delta u}{\partial_t v - \Delta v}
    + \innerh[\rmL^2(I, \rmL^2(\partial\Omega))]{u}{v}
    + \innerh[\rmL^2(\Omega)]{u(0, \cdot)}{v(0, \cdot)}.
\end{equation}
Considerations on the coercivity of the metric analogous to
\Cref{rmk:coercivity_poisson} apply here; see
\cite[Section~4.5]{zeinhofer_unified_2024} for details.

\paragraph{Results}
As shown in \Cref{fig:heat_h1_time}, Nystr{\"o}mNGD and RPCholNGD again achieve the same accuracy as NGD full at a reduced computational cost. However, \Cref{fig:heat_h1_iter} indicates that the preconditioned NGD methods require slightly more iterations to converge compared to NGD with a direct solver. This is likely because the singular value decay (see \Cref{fig:heat_svals_decay}) is less strong than in the Poisson example, resulting in a less effective preconditioner within the maximum rank $\ell_{\max} = 500$, a limit that is rapidly reached (see \Cref{fig:heat_ell_iter}). Nevertheless, all preconditioned NGD variants perform significantly better than unpreconditioned matrix-free NGD, which yields an accuracy two orders of magnitude worse and is also slower per iteration. This again underscores the importance of preconditioning in matrix-free NGD methods.

\begin{figure}[htb]
    \centering
    \begin{subfigure}[t]{0.49\textwidth}
        \includegraphics[width=\textwidth]{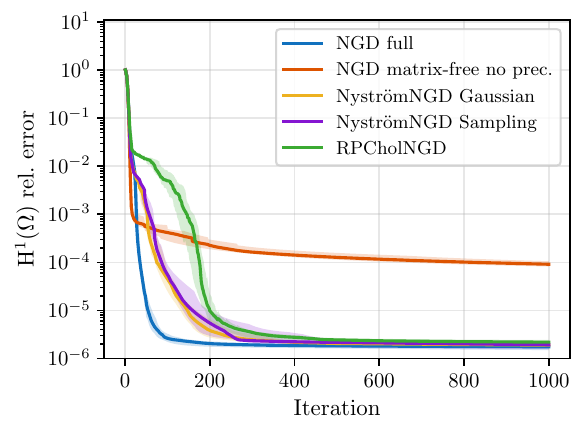}
        \caption{Relative $\rmH^1(\Omega)$ error vs.\ Iterations.}
        \label{fig:heat_h1_iter}
    \end{subfigure}
    \hfill
    \begin{subfigure}[t]{0.49\textwidth}
        \includegraphics[width=\textwidth]{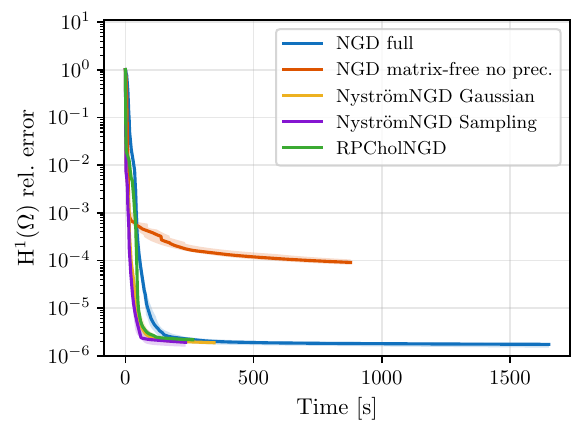}
        \caption{Relative $\rmH^1(\Omega)$ error vs.\ wall-clock time.}
        \label{fig:heat_h1_time}
    \end{subfigure}
    \hfill
    \begin{subfigure}[t]{0.49\textwidth}
        \includegraphics[width=\textwidth]{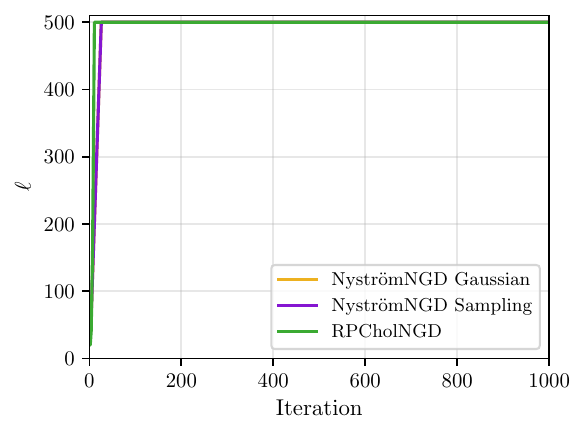}
        \caption{Nystr{\"o}m rank parameter $\ell$ vs.\ iterations, with
            $\ell_{\max} = 500$.}
        \label{fig:heat_ell_iter}
    \end{subfigure}
    \hfill
    \begin{subfigure}[t]{0.49\textwidth}
        \includegraphics[width=\textwidth]{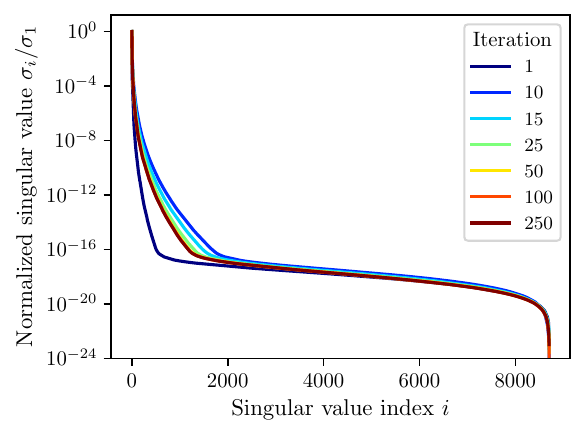}
        \caption{Decay of the normalized singular values
        $\{\sigma_i/\sigma_1\}_{i=1}^p$ of the Gramian during the first 250
        iterations of training using NGD with direct solver, where each color
        represents a different iteration.}
        \label{fig:heat_svals_decay}
    \end{subfigure}
    \caption{Convergence for the 3+1D Heat equation discretized via PINNs.}
    \label{fig:main-error-heat}
\end{figure}

\subsubsection{Kovasznay Flow (2D steady state Navier--Stokes flow) -- PINNs}
We consider the two-dimensional steady Navier--Stokes flow as originally
described in \cite{kovasznay_laminar_1948}, with Reynolds number $\mathrm{Re} =
    40$, following the setup in \cite[Section 4.1]{jnini_gauss-newton_2024}. The
system of PDEs reads
\begin{equation}
    \label{eq:navierstokes_system}
    \begin{cases}
        - \nu\Delta u  + (u \cdot\nabla)u+\nabla p = f & \Omega = (-\frac{1}{2},1) \times (-\frac{1}{2}, \frac{3}{2}), \\
        \nabla \cdot u = 0                             & \Omega,                                                       \\
        u = g_D                                        & \partial\Omega,
    \end{cases}
\end{equation}
with exact solution
\begin{equation*}
    \begin{split}
        u(x,y) & = \begin{bmatrix}
                       1 - e^{\lambda x} \cos(2\pi y) \\
                       \frac{\lambda}{2\pi}e^{\lambda x} \sin(2\pi y)
                   \end{bmatrix}, \\
        p(x,y) & = \frac{1}{2}(1-e^{2\lambda x}) - c,
    \end{split}
\end{equation*}
where $\lambda = \frac{1}{2\nu} - \sqrt{\frac{1}{4\nu^2}+4\pi^2}$, $\nu =
    \frac{1}{\mathrm{Re}} = \frac{1}{40}$, and $c$ is chosen such that $\int_\Omega
    p = 0$. Following \cite{jnini_gauss-newton_2024}, we consider the function
spaces $\rmV = \rmH^2(\Omega) \times \{p\in\rmH^1(\Omega): \int_\Omega p = 0\}$
and $\rmX = \rmL^2(\Omega)^2 \times \rmL^2(\Omega) \times
    \rmL^2(\partial\Omega)$. Problem~\eqref{eq:navierstokes_system} is written in
strong form as a nonlinear least-squares problem with residual
\begin{equation}
    \label{eq:navier_residual}
    \calR(u,p) = \begin{bmatrix}
        - \nu\Delta u  + (u \cdot\nabla)u+\nabla p - f \\
        \nabla \cdot u                                 \\
        u - g_D
    \end{bmatrix},
\end{equation}
and corresponding energy functional
\begin{equation}
    \label{eq:navier_loss}
    E(u,p)  = \frac{1}{2} \norm{- \nu\Delta u  + (u \cdot\nabla)u+\nabla p - f}^2_{\rmL^2(\Omega)^2}
    + \frac{1}{2} \norm{\nabla \cdot u}^2_{\rmL^2(\Omega)}
    + \frac{1}{2} \norm{u - g_D}^2_{\rmL^2(\partial\Omega)}.
\end{equation}
As a metric, we consider the Gauss--Newton metric
\begin{equation*}
    \bilinear[g_{(u,p)}]{(v, h)}{(w,t)} = \innerh{\rmD\calR(u,p)[(v,h)]}{\rmD\calR(u,p)[(w,t)]}.
\end{equation*}
Since the residual \eqref{eq:navier_residual} is nonlinear, the metric is
point-dependent. We refer to \cite{jnini_gauss-newton_2024} for a discussion on
coercivity.

To discretize \eqref{eq:navierstokes_system}, we parameterize the velocity and
pressure fields using two separate neural networks, $(u, p) =
    (u_{\vtheta_1}, p_{\vtheta_2})$, and concatenate their weights into a single
parameter vector $\vtheta = [\vtheta_1, \vtheta_2]$. To ensure the total parameter
count remains comparable to the other experiments, both networks are configured
with three hidden layers of $45$ neurons each. Furthermore, because we utilize
the Gauss-Newton metric for this problem, we adapt the regularization parameter
$\mu$ using the standard Levenberg-Marquardt heuristic \cite{wright_numerical_2006}.

\paragraph{Results}
In this example, Nystr{\"o}mNGD with Gaussian test matrices converges in fewer iterations compared to the other methods, as shown in \Cref{fig:navier_h1_iter}. However, Nystr{\"o}mNGD with sampling test matrices and RPCholNGD benefit from a lower per-iteration cost, allowing them to achieve comparable overall convergence times (see \Cref{fig:navier_h1_time}). Interestingly, the Nystr{\"o}m rank $\ell$ exhibits a pronounced peak during intermediate iterations, hitting the maximum rank $\ell_{\max} = 500$ before decreasing and stabilizing around $\ell = 400$ (see \Cref{fig:navier_ell_iter}). Similar behavior is typically observed when approximating PDE solutions using low-rank techniques \cite{bioli_preconditioned_2025}. Conversely, RPCholNGD steadily increases the rank to approximately $\ell=220$, well below the maximum $\ell_{\max} = 500$, and achieves similar accuracy with a reduced memory footprint. This demonstrates that the choice of preconditioner can impact both the convergence speed (in iterations and wall-clock time) and the memory requirements of NGD.

\begin{figure}[htb]
    \centering
    \begin{subfigure}[t]{0.49\textwidth}
        \includegraphics[width=\textwidth]{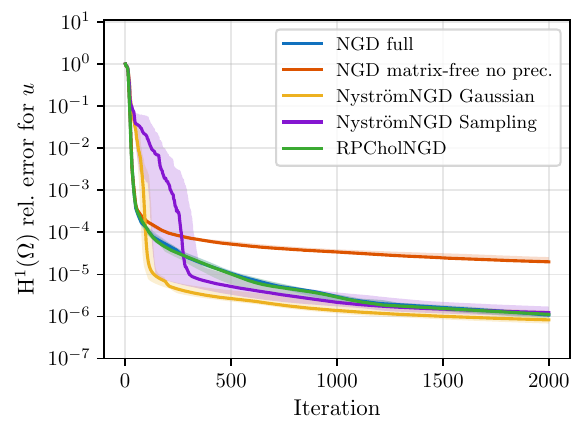}
        \caption{Relative $\rmH^1(\Omega)$ error on $u$ vs.\ Iterations.}
        \label{fig:navier_h1_iter}
    \end{subfigure}
    \hfill
    \begin{subfigure}[t]{0.49\textwidth}
        \includegraphics[width=\textwidth]{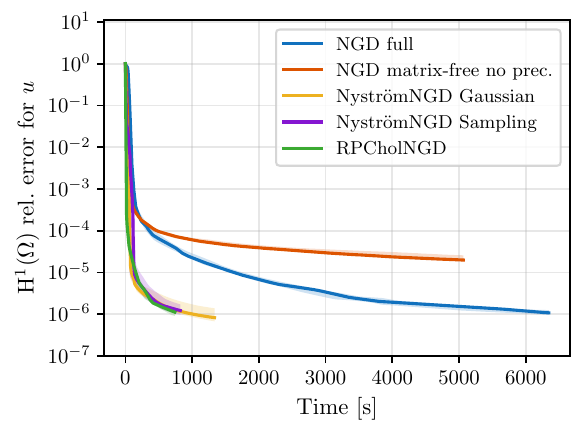}
        \caption{Relative $\rmH^1(\Omega)$ error on $u$ vs.\ wall-clock time.}
        \label{fig:navier_h1_time}
    \end{subfigure}
    \hfill
    \begin{subfigure}[t]{0.49\textwidth}
        \includegraphics[width=\textwidth]{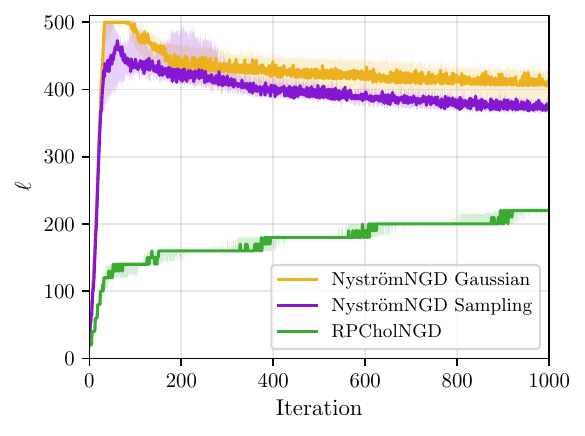}
        \caption{Nystr{\"o}m rank parameter $\ell$ vs.\ iterations, with $\ell_{\max} = 500$.}
        \label{fig:navier_ell_iter}
    \end{subfigure}
    \hfill
    \begin{subfigure}[t]{0.49\textwidth}
        \includegraphics[width=\textwidth]{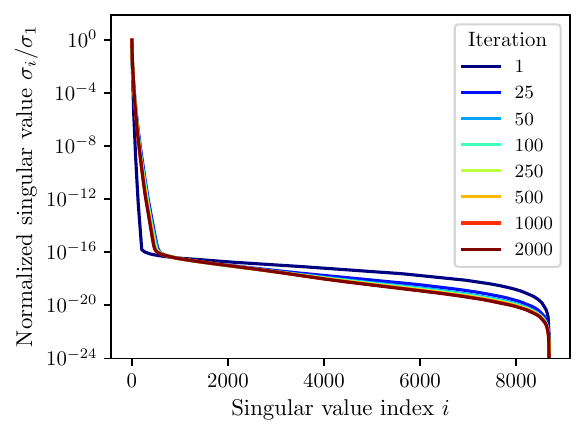}
        \caption{Decay of the normalized singular values
        $\{\sigma_i/\sigma_1\}_{i=1}^p$ of the Gramian during training using NGD with direct solver, where each color
        represents a different iteration.}
        \label{fig:navier_svals_decay}
    \end{subfigure}
    \caption{Convergence for the Kovasznay Flow discretized via PINNs.}
    \label{fig:main-error-navier}
\end{figure}

\subsubsection{Poisson problem in 2D -- FEINNs Energy minimization (Deep Ritz like) formulation}
\label{sec:poisson_deepritz}
We consider the Poisson problem on the 2D unit square
\begin{equation}
    \label{eq:poisson_deepritz_pde}
    \begin{cases}
        -\Delta u = f & \Omega = (0, 1)^2, \\
        u = g_D       & \partial\Omega,
    \end{cases}
\end{equation}
where $f \in \rmH^{-1}(\Omega)$ and $g_D \in
    \rmH^{\frac{1}{2}}(\partial\Omega)$. As exact solution we take $u(x,y) =
    \sin(\pi x)\sin(\pi y)$.

All equalities in \eqref{eq:poisson_deepritz_pde} are understood in the
variational sense. That is, given a lifting $\tilde{g}_D \in \rmH^1(\Omega)$
such that $\restr{\tilde{g}_D}{\partial\Omega} = g_D$, we seek $u_0 \in
    \rmH^1_0(\Omega)$ satisfying
\begin{equation}
    a(u_0, v) = \int_{\Omega} fv\, \mathrm{d}\vx - a(\tilde{g}_D, v) \quad \forall v \in \rmH^1_0(\Omega), \qquad \text{where} \quad a(u, v) = \int_{\Omega} \nabla u \cdot \nabla v \, \mathrm{d}\vx.
\end{equation}
The solution is then $u = u_0 + \tilde{g}_D$. The discretization spaces are $\rmV = \rmH^1_0(\Omega)$, $\rmX = \dual{\rmV} = \rmH^{-1}(\Omega)$, with linear operator
\[
    \calF: \rmH^1_0(\Omega) \to \rmH^{-1}(\Omega), \qquad
    \dualpair{\calF u}{v}_{\rmH^{-1} \times \rmH^1_0} = a(u,v).
\]
Since $a$ is symmetric and coercive on $\rmV \times \rmV$, we adopt an energy
minimization formulation, akin to the Deep Ritz method:
\begin{equation}
    \label{eq:energy_poissondeepritz}
    \min_{u_0 \in \rmH^1_0(\Omega)} E(u_0) \defas
    \int_{\Omega} \frac{1}{2} \norm{\nabla u_0}^2 - f u_0 + \nabla \tilde{g}_D \cdot \nabla u_0 \, \mathrm{d}\vx,
\end{equation}
with corresponding metric $g = a$.

Since the energy \eqref{eq:energy_poissondeepritz} may take negative values and does not vanish at the minimum, we cannot employ a regularization term of the form $\mu = \max(\gamma \cdot \hat{\lambda}_1 \cdot \epsilon_{\mathrm{mach}}, 10^{-4} \cdot L(\vtheta)^2)$ to enforce strong regularization during early training iterations. Instead, at each iteration $k$, we utilize $\mu = \max(\gamma \cdot \hat{\lambda}_1 \cdot \epsilon_{\mathrm{mach}}, 2^{-k})$.

\paragraph{Results}
Nystr{\"o}mNGD and RPCholNGD exhibit almost identical convergence in iterations compared to NGD full, reaching the same accuracy at a fraction of the computational cost, as shown in \Cref{fig:deepritz_h1_time}. The performance gap between the preconditioned NGD variants and NGD full is more pronounced in this example than in previous ones because a rank of $\ell\approx150$ suffices for effective preconditioning (see \Cref{fig:deepritz_ell_iter}). This is consistent with the significantly sharper singular value decay observed in \Cref{fig:deepritz_svals_decay} compared to earlier examples. Conversely, unpreconditioned matrix-free NGD remains uncompetitive, further underscoring the critical importance of preconditioning.

\begin{figure}[htb]
    \centering
    \begin{subfigure}[t]{0.49\textwidth}
        \includegraphics[width=\textwidth]{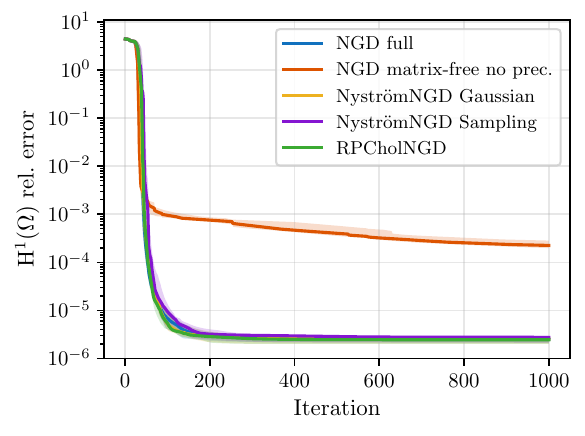}
        \caption{Relative $\rmH^1(\Omega)$ error vs.\ Iterations.}
        \label{fig:deepritz_h1_iter}
    \end{subfigure}
    \hfill
    \begin{subfigure}[t]{0.49\textwidth}
        \includegraphics[width=\textwidth]{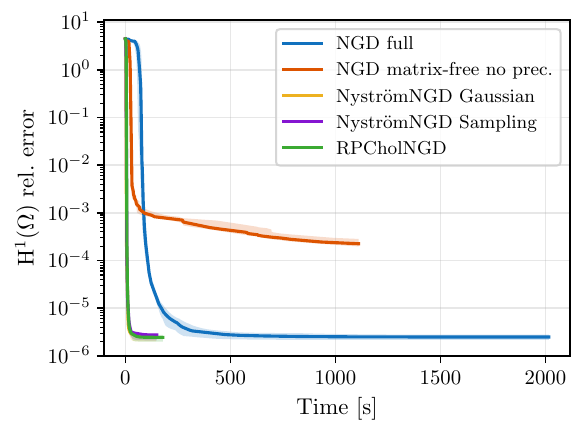}
        \caption{Relative $\rmH^1(\Omega)$ error vs.\ wall-clock time.}
        \label{fig:deepritz_h1_time}
    \end{subfigure}
    \hfill
    \begin{subfigure}[t]{0.49\textwidth}
        \includegraphics[width=\textwidth]{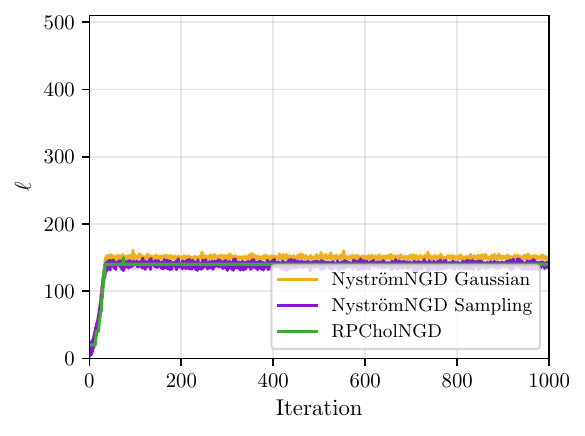}
        \caption{Nystr{\"o}m rank parameter $\ell$ vs.\ iterations, with $\ell_{\max} = 500$.}
        \label{fig:deepritz_ell_iter}
    \end{subfigure}
    \hfill
    \begin{subfigure}[t]{0.49\textwidth}
        \includegraphics[width=\textwidth]{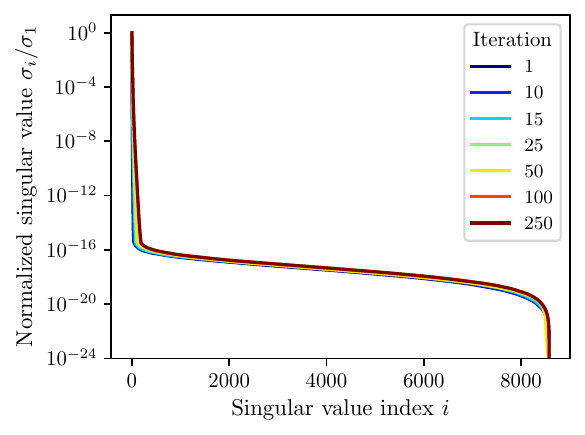}
        \caption{Decay of the normalized singular values
        $\{\sigma_i/\sigma_1\}_{i=1}^p$ of the Gramian during the first 250
        iterations of training using NGD with direct solver, where each color
        represents a different iteration.}
        \label{fig:deepritz_svals_decay}
    \end{subfigure}
    \caption{Convergence for the Poisson equation in 2D discretized via FEINNs and energy minimization.}
    \label{fig:error_poissondeepritz}
\end{figure}

\subsubsection{Diffusion-reaction-transport problem in 2D -- FEINNs}
\label{sec:canuto_feinn}
We consider the diffusion-reaction-transport problem
\begin{equation}
    \begin{dcases}
        -\nabla \cdot (\kappa \nabla u) + \vbeta \cdot \nabla u + \sigma u = F & \Omega = (0, 1)^2,               \\
        u = g                                                                  & \Gamma_D \subset\partial\Omega,  \\
        \kappa \nabla u \cdot \vn = \psi                                       & \Gamma_N \subset \partial\Omega,
    \end{dcases}
    \label{eq:canuto_pde}
\end{equation}
where $\kappa(x,y) = 2 + \sin(x + 2y)$, $\vbeta(x, y) = \begin{bmatrix} \sqrt{x
        - y^2 + 5} & \sqrt{y - x^2 + 5}\end{bmatrix}^\top$, and $\sigma(x,y) =
    e^{\frac{x}{2}-\frac{y}{3}} + 2$. The source term $f$ is chosen so that the
exact solution is
\[
    u(x,y) = \sin(3.2x(x-y))\cos(4.3y+x) + \sin(4.6(x+2y))\cos(2.6(y-2x)),
\]
as in \cite{berrone_variational_2022,badia_finite_2024}. The Dirichlet boundary
$\Gamma_D$ consists of the left and right sides of the domain, while the Neumann
boundary $\Gamma_N$ corresponds to the top and bottom edges.

The function spaces are $\rmV = \rmH^1_{0,\Gamma_D}(\Omega)$, $\rmX =
    \dual{\rmV} = \rmH^{-1}_{\Gamma_D}(\Omega)$, and the weak formulation of
\eqref{eq:canuto_pde} is defined as $\calF u = f$ where
\begin{equation*}
    \begin{split}
        \dualpair{\calF u}{v}_{\rmH^{-1}_{\Gamma_D} \times \rmH^1_{0,\Gamma_D}} & = a(u,v) = \int_{\Omega} (\kappa\nabla u) \cdot \nabla v + (\vbeta \cdot \nabla u)\, v + \sigma\,u \,v\,\,\mathrm{d}\vx \\
        \dualpair{f}{v}_{\rmH^{-1}_{\Gamma_D} \times \rmH^1_{0,\Gamma_D}}       & = \int_{\Omega} F\,v\,\,\mathrm{d}\vx + \int_{\Gamma_N} \psi \,v\,\,\mathrm{d}\vs
    \end{split}
\end{equation*}
Since $\calF$ is in general not symmetric positive definite, we adopt the
least-squares formulation
\begin{equation}
    \label{eq:loss_canuto}
    \min_{u \in \rmH^1_0(\Omega)} E(u) \defas \frac{1}{2} \norm{\calF u - f}_{\rmH^{-1}_{\Gamma_D}}^2 = \dualpair{\calF u - f}{\calR_{\rmV}^{-1} (\calF u - f)}_{\rmH^{-1}_{\Gamma_D} \times \rmH^1_{0,\Gamma_D}},
\end{equation}
Here, $\calR_{\rmV}^{-1} = (-\Delta)^{-1} : \rmH^{-1}_{\Gamma_D} \to
    \rmH^1_{0,\Gamma_D}$ denotes the inverse Riesz operator, which involves
solving a Poisson equation. The least-squares metric would be $\innerh{v}{w}
    = \dualpair{\calF w}{\calR_{\rmV}^{-1}\calF v}$. To reduce the computational
cost, we consider the $\rmH^1_{0,\Gamma_D}$ norm $\bilinear[g]{u}{v} = \int_{\Omega}
    \nabla v \cdot \nabla w\,\mathrm{d}\vx$. This choice is motivated by the fact
that the $\rmH^1_{0,\Gamma_D}$ norm is spectrally equivalent to the least-squares metric
due to the inf-sup condition, and it allows us to avoid the costly
application of $\calR_{\rmV}^{-1}$ for each matvec with the Gramian.

When discretizing \eqref{eq:loss_canuto} with FEINNs, applying the discrete
Riesz operator $\calR_{\rmV_h}^{-1}$ corresponds to solving a Poisson problem
with FEM. To reduce computational cost, one could instead use a spectrally
equivalent approximation, such as a few cycles of a geometric multigrid
preconditioner, as done in \cite{badia_finite_2024}. However, for simplicity, we
solve the Poisson problem directly using FEM, computing a Cholesky factorization
of the Laplacian matrix once.

\paragraph{Results}
\Cref{fig:canuto_h1_iter} shows that Nystr{\"o}mNGD and RPCholNGD exhibit per-iteration convergence behavior almost identical to NGD with a direct solver, although Nystr{\"o}mNGD requires a slightly higher iteration count. In contrast, unpreconditioned matrix-free NGD plateaus at an accuracy two orders of magnitude worse. Furthermore, the preconditioned matrix-free NGD methods yield significant time savings (\Cref{fig:canuto_h1_time}). This efficiency stems from constructing the preconditioner with a rank of $\ell \approx 200$, which is approximately $40\times$ smaller than the total parameter count $p=8577$.

\begin{figure}[htb]
    \centering
    \begin{subfigure}[t]{0.49\textwidth}
        \includegraphics[width=\textwidth]{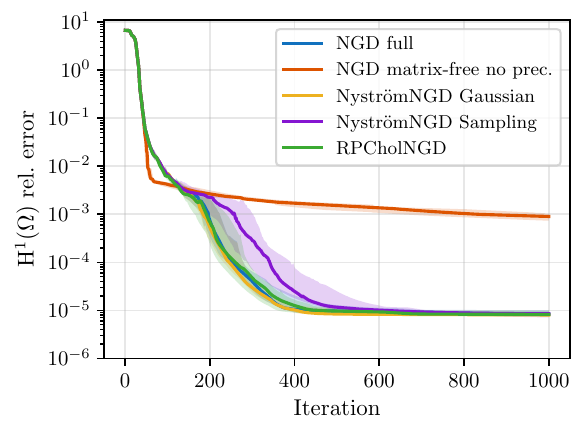}
        \caption{Relative $\rmH^1(\Omega)$ error vs.\ Iterations.}
        \label{fig:canuto_h1_iter}
    \end{subfigure}
    \hfill
    \begin{subfigure}[t]{0.49\textwidth}
        \includegraphics[width=\textwidth]{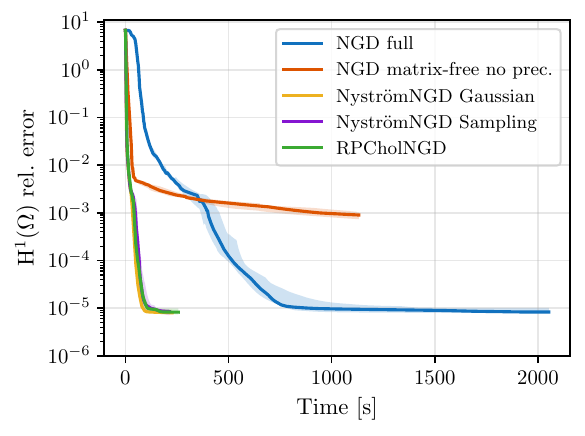}
        \caption{Relative $\rmH^1(\Omega)$ error vs.\ wall-clock time.}
        \label{fig:canuto_h1_time}
    \end{subfigure}
    \hfill
    \begin{subfigure}[t]{0.49\textwidth}
        \includegraphics[width=\textwidth]{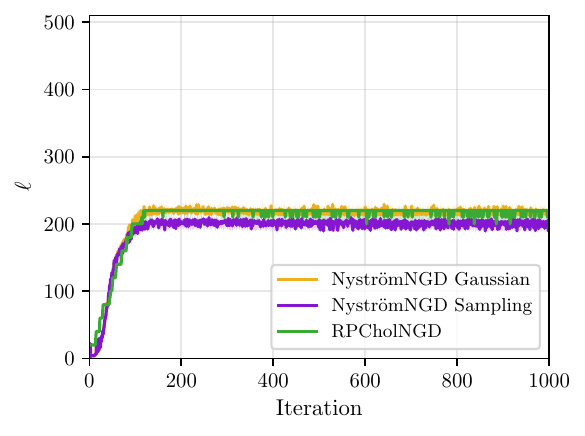}
        \caption{Nystr{\"o}m rank parameter $\ell$ vs.\ iterations, with $\ell_{\max} = 500$.}
        \label{fig:canuto_ell_iter}
    \end{subfigure}
    \hfill
    \begin{subfigure}[t]{0.49\textwidth}
        \includegraphics[width=\textwidth]{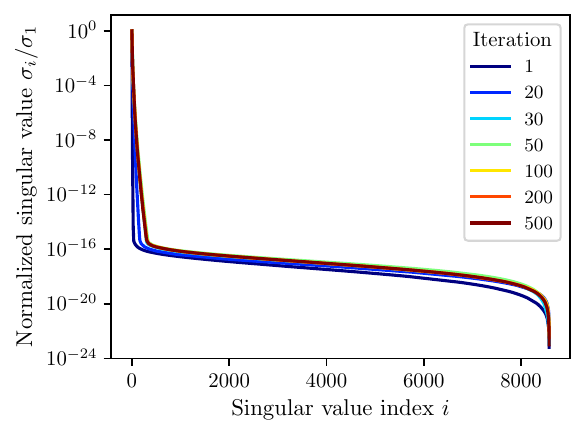}
        \caption{Decay of the normalized singular values
        $\{\sigma_i/\sigma_1\}_{i=1}^p$ of the Gramian during the first 500
        iterations of training using NGD with direct solver, where each color
        represents a different iteration.}
        \label{fig:canuto_svals_decay}
    \end{subfigure}
    \caption{Convergence for the diffusion-reaction-transport equation \eqref{eq:canuto_pde} in 2D discretized via FEINNs.}
    \label{fig:error_canuto}
\end{figure}

\subsection{Comparison with other optimizers}
\label{sec:comparison_others}
In this section we compare the optimizers from the NGD framework, and with more
care Nystr{\"o}mNGD and RPCholNGD, against a suite of standard and
state-of-the-art optimizers (Adam, L-BFGS, BFGS and SSBroyden) on all five test
problems. Since each optimizer is run for a deliberately large number of
iterations, often well beyond the point of practical convergence, we quantify
performance at the iteration where each optimizer effectively plateaus.
Specifically, we define a \emph{plateau} as the first iteration after which the
relative error fails to improve by more than $10\%$ within a
trailing window covering $10\%$ of the total number of iterations.
\Cref{fig:comparison_NGD_others} shows the full convergence history for all
optimizers on each problem; a dot~($\bullet$) on each curve marks the detected
plateau iterate, confirming that the criterion reliably captures the end of
productive convergence. \Cref{tab:comparison_NGD_others} reports, for each
optimizer and test problem, the error at the plateau and the corresponding
cumulative wall-clock time in seconds (in parentheses).

As reported in \Cref{tab:comparison_NGD_others}, the final accuracy achieved by NGD with a direct solver (NGD full) and the matrix-free preconditioned NGD variants (Nystr{\"o}mNGD and RPCholNGD) is highly consistent across all test problems. Furthermore, this accuracy is comparable to that of SSBroyden, which emerges as the best-performing optimizer among the competitors by a significant margin, corroborating recent findings in the literature. The runner-up among the baseline methods is BFGS; however, its final accuracy on the PINN examples remains more than an order of magnitude worse. Crucially, both SSBroyden and BFGS entail memory requirements of $\mathcal{O}(p^2)$, whereas the preconditioned NGD methods operate with a memory footprint of $\mathcal{O}(p\ell_{\max})$. When comparing our approaches to L-BFGS, which possesses similar memory requirements given history size $\ell_{\max}$, the superior final accuracy of Nystr{\"o}mNGD and RPCholNGD is striking. Adam, being a first-order method, consistently yields the poorest accuracy across all benchmarks.

In terms of computational efficiency, RPCholNGD and Nystr{\"o}mNGD are significantly faster than SSBroyden on almost all evaluated problems, yielding remarkable wall-clock time savings compared to the current state-of-the-art in PINN optimization. Among the proposed variants, Nystr{\"o}mNGD offers the best trade-off between final accuracy and computational time, with Gaussian test matrices serving as a robust default choice. The only notable exception is the Kovasznay problem, where SSBroyden is faster and reaches a slightly lower error than Nystr{\"o}mNGD with Gaussian test matrices. Conversely, on problems such as the 3D Poisson equation, the opposite occurs, with Nystr{\"o}mNGD achieving superior accuracy compared to SSBroyden in a significantly lower time. Overall, preconditioned NGD methods demonstrate a superior accuracy-computational time tradeoff on the majority of the tested problems, as is clearly visible in \Cref{fig:comparison_NGD_others}.

\begin{table}[htpb]
    \centering
    \caption{Relative $\rmH^1(\Omega)$ error at the plateau iterate and corresponding cumulative wall-clock time in seconds (in parentheses) for each optimizer and test problem.
        The plateau criterion and the dot~($\bullet$) marking the corresponding
        iterate in the convergence curves are described in
        \Cref{sec:comparison_others}; see \Cref{fig:comparison_NGD_others}.}
    \label{tab:comparison_NGD_others}
    \footnotesize
    \resizebox{\textwidth}{!}{%
        \begin{tblr}{
                colspec = {l|ccccc},
                row{even} = {gray!15}, 
                row{1} = {white},      
            }
            \toprule
            \textbf{Optimizer}
             & \textbf{Poisson 3D}
             & \textbf{Heat 3+1D}
             & \textbf{Kovasznay }
             & \textbf{Deep-Ritz Poisson}
             & \textbf{FEINNs 2D}         \\
            \midrule
            \textbf{NGD full}
             & {$1.56\times10^{-6}$       \\ (317.7 s)}
             & {$1.75\times10^{-6}$       \\ (279.2 s)}
             & {$1.08\times10^{-6}$       \\ (5708.3 s)}
             & {$1.63\times10^{-5}$       \\ (411.3 s)}
             & {$6.61\times10^{-5}$       \\ (852.1 s)} \\
            \textbf{NGD matrix-free (no prec.)}
             & {$3.44\times10^{-4}$       \\ (409.2 s)}
             & {$9.13\times10^{-5}$       \\ (363.8 s)}
             & {$1.99\times10^{-5}$       \\ (3726.2 s)}
             & {$2.34\times10^{-4}$       \\ (923.8 s)}
             & {$8.42\times10^{-4}$       \\ (1016.0 s)} \\
            \textbf{RPCholNGD}
             & {$1.69\times10^{-6}$       \\ (74.1 s)}
             & {$2.18\times10^{-6}$       \\ (111.9 s)}
             & {$1.12\times10^{-6}$       \\ (654.4 s)}
             & {$1.63\times10^{-5}$       \\ (31.9 s)}
             & {$6.61\times10^{-5}$       \\ (101.8 s)} \\
            \textbf{Nystr\"omNGD Gaussian}
             & {$1.66\times10^{-6}$       \\ (66.8 s)}
             & {$1.89\times10^{-6}$       \\ (106.1 s)}
             & {$8.24\times10^{-7}$       \\ (878.5 s)}
             & {$1.63\times10^{-5}$       \\ (23.7 s)}
             & {$6.60\times10^{-5}$       \\ (86.4 s)} \\
            \textbf{Nystr\"omNGD Sampling}
             & {$1.97\times10^{-6}$       \\ (81.2 s)}
             & {$1.93\times10^{-6}$       \\ (59.0 s)}
             & {$1.22\times10^{-6}$       \\ (548.0 s)}
             & {$1.63\times10^{-5}$       \\ (29.0 s)}
             & {$6.63\times10^{-5}$       \\ (101.6 s)} \\
            \textbf{BFGS}
             & {$8.91\times10^{-5}$       \\ (540.4 s)}
             & {$1.03\times10^{-4}$       \\ (684.0 s)}
             & {$2.92\times10^{-5}$       \\ (418.6 s)}
             & {$3.64\times10^{-5}$       \\ (141.8 s)}
             & {$7.30\times10^{-5}$       \\ (171.0 s)} \\
            \textbf{SSBroyden}
             & {$2.69\times10^{-6}$       \\ (482.6 s)}
             & {$2.39\times10^{-6}$       \\ (477.4 s)}
             & {$5.45\times10^{-7}$       \\ (329.3 s)}
             & {$1.60\times10^{-5}$       \\ (80.1 s)}
             & {$6.61\times10^{-5}$       \\ (139.8 s)} \\
            \textbf{L-BFGS}
             & {$7.91\times10^{-4}$       \\ (1104.9 s)}
             & {$3.91\times10^{-4}$       \\ (867.5 s)}
             & {$1.04\times10^{-4}$       \\ (774.2 s)}
             & {$1.68\times10^{-3}$       \\ (396.8 s)}
             & {$4.94\times10^{-3}$       \\ (607.8 s)} \\
            \textbf{Adam}
             & {$1.95\times10^{-2}$       \\ (412.7 s)}
             & {$1.87\times10^{-3}$       \\ (1077.9 s)}
             & {$2.47\times10^{-3}$       \\ (609.8 s)}
             & {$2.37\times10^{-2}$       \\ (82.0 s)}
             & {$2.54\times10^{-2}$       \\ (563.7 s)} \\
            \bottomrule
        \end{tblr}%
    }
\end{table}

\begin{figure}
    \begin{subfigure}[t]{\textwidth}
        \begin{minipage}[t]{0.49\textwidth}
            \includegraphics[width=\textwidth]{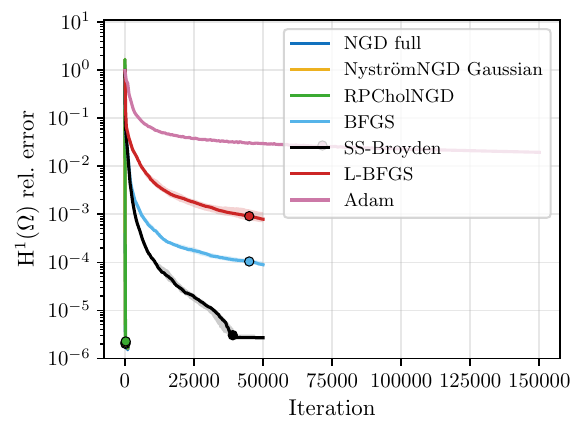}
        \end{minipage}
        \hfill
        \begin{minipage}[t]{0.49\textwidth}
            \includegraphics[width=\textwidth]{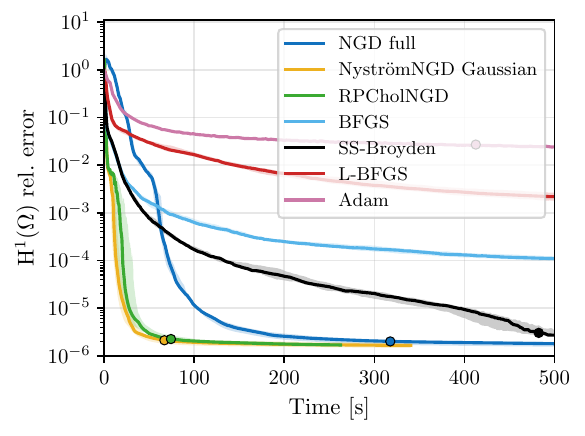}
        \end{minipage}
        \caption{Poisson problem in 3D -- PINNs.}
    \end{subfigure}
    \medskip
    \begin{subfigure}[t]{\textwidth}
        \begin{minipage}[t]{0.49\textwidth}
            \includegraphics[width=\textwidth]{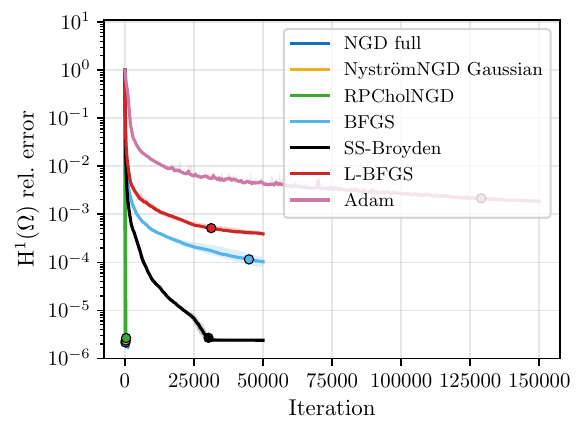}
        \end{minipage}
        \hfill
        \begin{minipage}[t]{0.49\textwidth}
            \includegraphics[width=\textwidth]{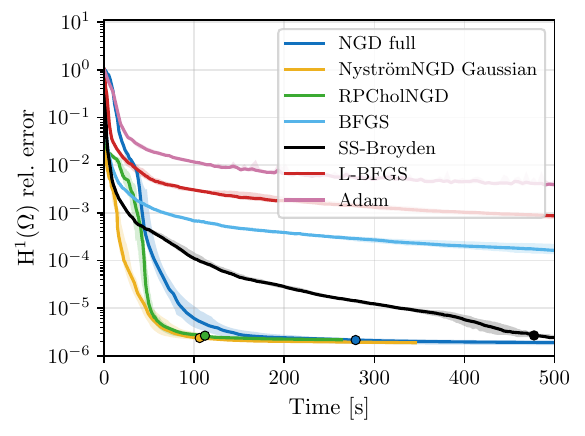}
        \end{minipage}
        \caption{Heat equation in 3+1D -- PINNs.}
    \end{subfigure}
    \medskip
    \begin{subfigure}[t]{\textwidth}
        \begin{minipage}[t]{0.49\textwidth}
            \includegraphics[width=\textwidth]{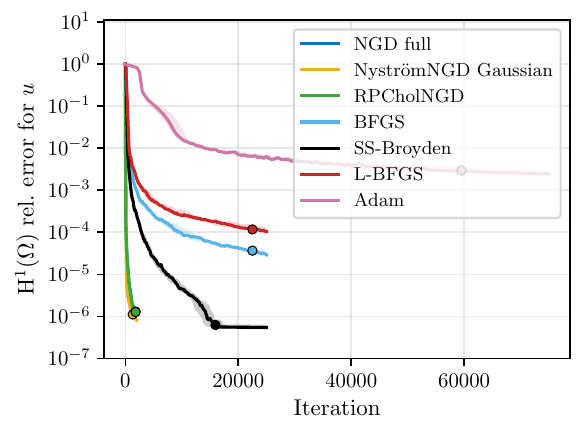}
        \end{minipage}
        \hfill
        \begin{minipage}[t]{0.49\textwidth}
            \includegraphics[width=\textwidth]{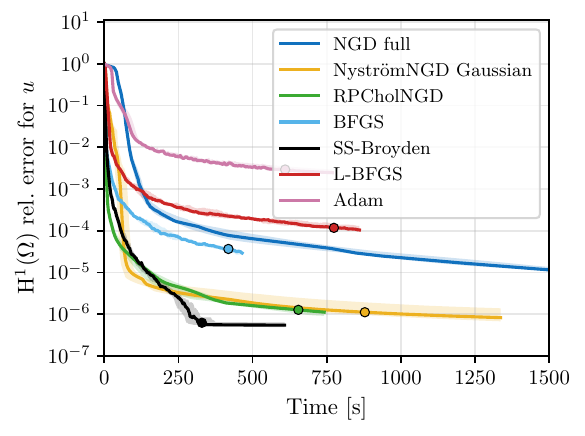}
        \end{minipage}
        \caption{Kovasznay Flow (2D steady state Navier-Stokes flow) -- PINNs. Relative $\rmH^1(\Omega)$ error on~$u$.}
    \end{subfigure}
\end{figure}
\begin{figure}
    \ContinuedFloat
    \begin{subfigure}[t]{\textwidth}
        \begin{minipage}[t]{0.49\textwidth}
            \includegraphics[width=\textwidth]{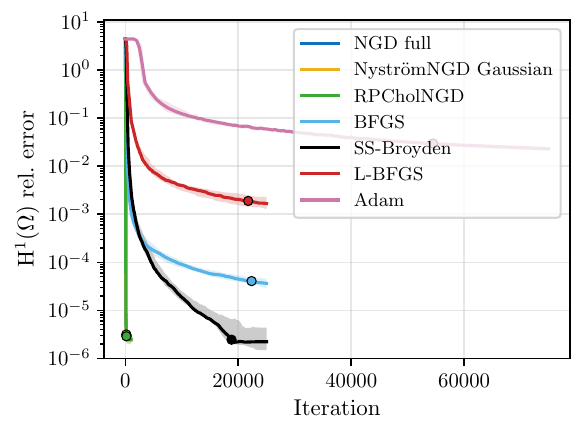}
        \end{minipage}
        \hfill
        \begin{minipage}[t]{0.49\textwidth}
            \includegraphics[width=\textwidth]{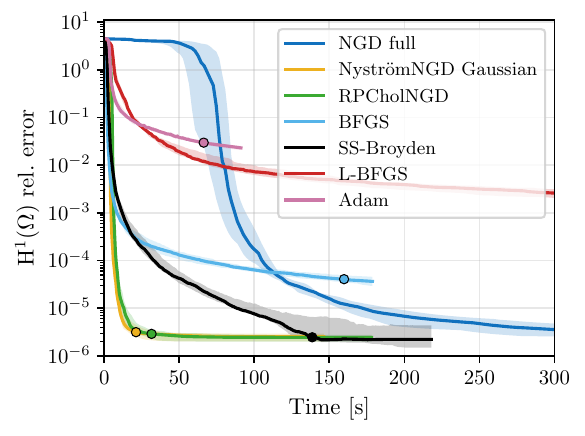}
        \end{minipage}
        \caption{Poisson problem in 2D -- FEINNs Energy minimization (Deep Ritz like) formulation.}
    \end{subfigure}
    \medskip
    \begin{subfigure}[t]{\textwidth}
        \begin{minipage}[t]{0.49\textwidth}
            \includegraphics[width=\textwidth]{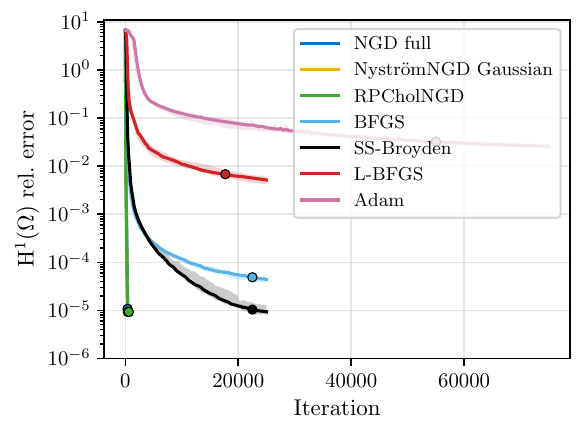}
        \end{minipage}
        \hfill
        \begin{minipage}[t]{0.49\textwidth}
            \includegraphics[width=\textwidth]{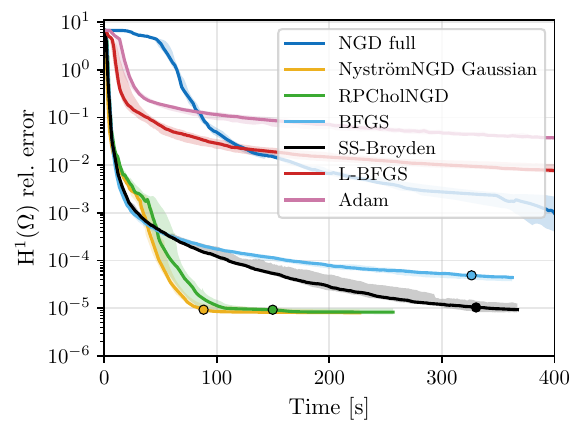}
        \end{minipage}
        \caption{Diffusion-reaction-transport problem in 2D – FEINNs.}
    \end{subfigure}
    \caption{Comparison of NGD with other optimizers across all test problems.
        Left: relative $\rmH^1(\Omega)$ error vs.\ iterations. Right: relative
        $\rmH^1(\Omega)$ error vs.\ wall-clock time (only first few iterations shown).
        The dot~($\bullet$) marks the plateau iterate at which the wall-clock time reported in \Cref{tab:comparison_NGD_others} is recorded.}
    \label{fig:comparison_NGD_others}
\end{figure}

\section{Discussion and Conclusion}
\label{sec:discussion}
We presented a modular framework for efficient natural gradient descent (NGD) and proposed the use of techniques from Randomized Numerical Linear Algebra (RandNLA), specifically the Nystr{\"o}m approximation and the Randomly Pivoted Partial Cholesky (RPCholesky) factorization, to construct a preconditioner for the inner conjugate gradient (CG) solver in matrix-free NGD. This led to \emph{Nystr{\"o}mNGD} and \emph{RPCholNGD}, two novel optimization algorithms for neural-network-based PDE solvers. Moreover, our work extends matrix-free NGD to a broader class of metrics relevant to PDE-related problems. Our approach significantly reduces the cost of the inner solver, resulting in faster convergence in both iterations and wall-clock time compared to NGD with a direct solver and unpreconditioned matrix-free NGD.

We validated Nystr{\"o}mNGD and RPCholNGD on various PDE problems, both in strong form discretized with PINNs and in weak form discretized with FEINNs, demonstrating that they consistently outperform previous NGD methods by achieving comparable or higher accuracy at a lower computational cost. Furthermore, on the majority of the tested problems, our methods achieve an accuracy comparable to SSBroyden, the current state-of-the-art optimizer for PINNs, but in significantly less time and with reduced memory requirements. While SSBroyden converges slightly faster on the specific Kovasznay example, the difference in performance is marginal, and the broader results confirm that preconditioned NGD methods can offer a superior accuracy-computational time tradeoff across a wide range of applications.

The use of randomized RandNLA low-rank approximations for preconditioning builds on the empirically observed low-rank structure of the Gramian. Our results confirm that the Gramian often admits strong low-rank compression, and low-rank preconditioners remain highly effective even when they do not fully capture the Gramian. Furthermore, the core idea of preconditioning the inner CG solver remains effective even when the Gramian does not admit a low-rank approximation, and we plan to investigate alternative Gramian approximations for preconditioning purposes. Moreover, Nystr{\"o}mNGD and RPCholNGD are particularly promising for neural-network-based solvers targeting weak formulations of PDEs, where the underlying variational structure can naturally guide the choice of metric. Our results in \Cref{sec:poisson_deepritz,sec:canuto_feinn} strongly support this potential and motivate further research in this direction. Finally, it would be valuable to extend these methods to operator learning, which requires accommodating the structure of operator spaces rather than just function spaces, and to broader large-scale machine learning tasks where NGD has already shown promise.

\section*{CRediT authorship contribution statement}

\indent \textbf{Ivan Bioli:} Conceptualization, Methodology, Software, Validation,
Formal analysis, Investigation, Data Curation, Writing -- Original Draft,
Writing -- Review \& Editing, Visualization

\textbf{Carlo Marcati:} Conceptualization, Methodology, Validation, Formal
analysis, Writing -- Original Draft, Writing -- Review \& Editing, Supervision

\textbf{Giancarlo Sangalli:} Conceptualization, Methodology, Validation, Formal
analysis, Resources, Writing -- Original Draft, Writing -- Review \& Editing,
Supervision, Project administration, Funding acquisition

\section*{Declaration of competing interest}
The authors declare that they have no known competing financial interests or personal relationships that could have appeared to influence the work reported in this paper.

\section*{Data availability}
No data were used for the research described in the article. The code used to
reproduce the findings of this paper is available from the corresponding author
upon request.

\section*{Acknowledgments}
The authors thank the anonymous reviewers for their constructive comments, which led to a more complete assessment of strategies for computing the Gramian and matrix-vector products (\Cref{sec:assembly_and_matvecs}) and to the inclusion of additional comparisons with state-of-the-art optimizers.

This work has received funding from the European Union’s Horizon Europe research
and innovation programme under the Marie Sklodowska-Curie Action
MSCA-DN-101119556 (IN-DEEP). The authors are members of the Gruppo Nazionale
Calcolo Scientifico - Istituto Nazionale di Alta Matematica (GNCS-INdAM). C.
Marcati and G. Sangalli acknowledge support from the Italian MUR through the
PRIN 2022 PNRR project NOTES (No. P2022NC97R).  G. Sangalli further acknowledges
support from the Italian MUR through  PNRR-M4C2-I1.4-NC-HPC-Spoke6. C.~Marcati
further acknowledges the PRIN 2022 project ASTICE (No.~202292JW3F) and
the French Agence Nationale de la Recherche and Ministère de
l’Enseignement Supérieur et de la Recherche.

\bibliography{bibliography}

\appendix
\section{Algorithmic details}
\subsection{Pseudocode for Randomized Nystr{\"o}m Approximation}
\label{apx:sec:randomized_nystrom}
We report a numerically stable algorithm for the Randomized Nystr{\"o}m
Approximation that also yields the eigendecomposition of
$\hat{\mG}_{\mathrm{nys}}$, as described in \cite[Algorithm
  16]{martinsson_randomized_2020}. An alternative implementation relying solely on
Cholesky decompositions, rather than Singular Value Decompositions (SVDs), was
proposed in \cite{guzman-cordero_improving_2025}. While Cholesky decompositions
are more amenable to GPU acceleration than SVDs, this approach is less
numerically stable. In practice, we observed that it leads to catastrophic
failure if the rank parameter $\ell$ is overestimated; consequently, we chose
not to adopt this implementation.
\begin{algorithm}[htbp]
  \begin{algorithmic}[1] 
    \Require{Symmetric positive semidefinite matrix $\mG\in\R^{p\times p}$ (as a function $\vv\mapsto\mG\vv$), rank $\ell$}
    \Ensure{Nystr{\"o}m approximation $\hat{\mG}_{\mathrm{nys}} = \mU \hat{\mLambda} \mU^\top$}

    \vspace{0.5pc}

    \State $\mOmega = \textrm{randn}(p, \ell)$
    \Comment{Gaussian test matrix}
    \State $\mOmega = \textrm{qr}(\mOmega,0)$
    \Comment{Thin QR decomposition}
    \State $\mY = \mG\mOmega$
    \Comment{$\ell$ matvecs with $\mG$}
    \State $\nu = \textrm{eps}(\textrm{norm}(\mY,\textrm{'fro'}))$
    \Comment{Compute shift}
    \State $\mY_{\nu} = \mY+\nu\mOmega$
    \Comment{Shift for stability}
    \State $\mC = \textrm{chol}(\mOmega^\top\mY_{\nu})$
    \State $\mB = \mY_{\nu}/\mC$ \Comment{Triangular solve}
    \State $[\mU,\mSigma,\sim] = \textrm{svd}(\mB,0)$
    \Comment{Thin SVD}
    \State $\hat{\mLambda} = \max\{0,\mSigma^{2}-\nu \mI\}$ \Comment{Remove shift}
  \end{algorithmic}
  \caption{Randomized Nystr{\"o}m Approximation \cite{frangella_randomized_2023,martinsson_randomized_2020}}
  \label{alg:randomized_nystrom}
\end{algorithm}

\subsection{Pseudocode for (block) RPCholesky}
\label{apx:sec:rpchol}
We present the pseudocode for both the standard and block variants of the
RPCholesky algorithm, as detailed in \cite{epperly_embrace_2025}.

\begin{algorithm}[htbp]
  \begin{algorithmic}[1]
    \Require{Symmetric positive semidefinite matrix $\mG\in\R^{p\times p}$
      (column access $s \mapsto \mG[:,s]$), diagonal $\vd^{(0)} = \diag(\mG)$, maximum approximation rank $\ell$, absolute
      error tolerance $\varepsilon$}
    \Ensure{$\mF\in\R^{p\times r}$ s.t.\ $\mG \approx \mF\mF^\top$ and $r < \ell$, pivot set $S$}

    \vspace{0.5pc}

    \State Initialize $\mF \leftarrow \mathbf{0}_{p\times\ell}$, $S \leftarrow \emptyset$, $\vd \leftarrow \vd^{(0)}$
    \For{$i = 1, \dots, \ell$}
    \State Sample $s \sim \vd\,/\!\textstyle\sum_j d_j$ \Comment{Sample proportionally to diagonal}
    \State $S \leftarrow S \cup \{s\}$
    \State $\vf \leftarrow \mG[:,s] - \mF[:,1:i{-}1]\,\mF[s,1:i{-}1]^\top$ \Comment{Residual pivot column}
    \State $\mF[:,i] \leftarrow \vf\,/\!\sqrt{\vf[s]}$
    \State $\vd \leftarrow \vd - \left(\mF[:,i]\right)^{\odot 2}$
    \Comment{Update diagonal}
    \If{$\sum_j d_j < \varepsilon$}
    \State \textbf{break}
    \EndIf
    \EndFor
    \State Remove zero columns from $\mF$
  \end{algorithmic}
  \caption{RPCholesky \cite[Algorithm D.1]{epperly_embrace_2025}}
  \label{alg:rpcholesky}
\end{algorithm}

\begin{algorithm}[htbp]
  \begin{algorithmic}[1]
    \Require{Positive semidefinite matrix $\mG\in\R^{p\times p}$ (column
      access), diagonal $\vd^{(0)} = \diag(\mG)$, block size $b > 1$, number of rounds $t$, absolute error tolerance $\varepsilon$}
    \Ensure{$\mF\in\R^{p\times r}$ s.t.\ $\mG \approx \mF\mF^\top$ and $r < tb$, pivot set $S$}

    \vspace{0.5pc}

    \State Initialize $\mF \leftarrow \mathbf{0}_{p\times bt}$, $S \leftarrow \emptyset$, $\vd \leftarrow \vd^{(0)}$
    \For{$i = 0, \dots, t{-}1$}
    \State Sample $s_{ib+1},\dots,s_{ib+b} \overset{\mathrm{i.i.d.}}{\sim} \vd\,/\!\textstyle\sum_j d_j$
    \State $S' \leftarrow \textsc{Unique}\!\left(\{s_{ib+1},\dots,s_{ib+b}\}\right)$
    \State $S \leftarrow S \cup S'$
    \State $\mC \leftarrow \mG[:,S'] - \mF[:,1:ib]\,\mF[S',1:ib]^\top$ \Comment{Residual block}
    \State $\mR \leftarrow \mathrm{chol}(\mC[S',:])$
    \State $\mF[:,ib{+}1:ib{+}|S'|] \leftarrow \mC\,\mR^{-1}$
    \State $\vd \leftarrow \vd - \textsc{SquaredRowNorms}\!\left(\mF[:,ib{+}1:ib{+}b]\right)$ \Comment{Update diagonal}
    \If{$\sum_j d_j < \varepsilon$}
    \State \textbf{break}
    \EndIf
    \EndFor
    \State Remove zero columns from $\mF$
  \end{algorithmic}
  \caption{Block RPCholesky \cite[Algorithm D.2]{epperly_embrace_2025}}
  \label{alg:block_rpcholesky}
\end{algorithm}

\section{Extended Experimental Details}
\label{apx:sec:numerical_details}

\paragraph{Implementation details}
All methods are implemented in JAX \cite{bradbury_jax_2018}. The standard optimization routines are sourced from Optimistix \cite{rader_optimistix_2024} and Optax \cite{deepmind2020jax}. For the implementation of RPCholesky, we adapted strategies from the Matfree library \cite{kramer_pnkraemermatfree_2025}. The complete codebase to reproduce the results will be made publicly available upon publication at \url{https://github.com/IvanBioli/preconditioned-natural-gradient.git}, and is currently available upon reasonable request.

\paragraph{Baseline optimizer configurations}
For the quasi-Newton baselines (L-BFGS, BFGS, and SSBroyden), we utilize a Zoom line search algorithm \cite{wright_numerical_2006} to ensure sufficient decrease and satisfy the Wolfe conditions. For Adam, we apply a cosine-decay learning rate schedule with a linear warmup phase of $5000$ iterations: the learning rate initializes at $10^{-4}$, peaks at $10^{-3}$, and decays to a final value of $10^{-5}$.

\paragraph{Problem-specific architectures and iteration budgets}
The total number of optimization iterations and the specific neural network architectures vary depending on the complexity of the PDE problem. A detailed breakdown is provided in \Cref{tab:apx_iterations_architectures}. For the 2D Navier-Stokes problem, the velocity field ($\mathbf{u}$) and the pressure field ($p$) are parameterized by two separate neural networks.

\begin{table}[htbp]
  \centering
  \caption{Iteration budgets for the evaluated optimizers and neural network architectures for each PDE experiment. Network shapes are denoted as \texttt{[input\_dim, hidden\_1, ..., hidden\_N, output\_dim]}.}
  \label{tab:apx_iterations_architectures}
  \resizebox{\textwidth}{!}{
    \begin{tblr}{
        colspec = {lccccc},
        row{even} = {gray!15}, 
        row{1} = {white},      
      }
      \toprule
      \textbf{Experiment} & \textbf{NGD (all)} & \textbf{(L-)BFGS} & \textbf{SSBroyden} & \textbf{Adam} & \textbf{Network Architecture}     \\
      \midrule
      Poisson 3D          & 1\,000             & 50\,000           & 50\,000            & 150\,000      & \texttt{[3, 64, 64, 64, 1]}       \\
      Heat 3+1D           & 1\,000             & 50\,000           & 50\,000            & 150\,000      & \texttt{[4, 64, 64, 64, 1]}       \\
      Kovasznay           & 2\,000             & 25\,000           & 25\,000            & 75\,000       & {$u$: \texttt{[2, 45, 45, 45, 2]} \\ $p$: \texttt{[2, 45, 45, 45, 1]}} \\
      Deep-Ritz Poisson   & 1\,000             & 25\,000           & 25\,000            & 75\,000       & \texttt{[2, 64, 64, 64, 1]}       \\
      FEINNs 2D           & 1\,000             & 25\,000           & 25\,000            & 75\,000       & \texttt{[2, 64, 64, 64, 1]}       \\
      \bottomrule
    \end{tblr}
  }
\end{table}

\paragraph{Discretization and sampling strategies}
The spatial and temporal domain discretizations depend heavily on whether the
problem is formulated strongly, via PINNs, or weakly, via FEINNs. For PINNs, we
employ MonteCarlo Quadrature: uniformly at random integration points are sampled
once and fixed throughout training, and errors are always computed using an
order of magnitude more quadrature points than during training, sampled
independently. For all experiments employing variational formulations (FEINNs),
the neural networks are interpolated onto a finite element (FE) space of degree
$d = 4$ over a uniform quadrilateral mesh with a mesh size of $h = 1/30$. This
configuration yields approximately $14{,}500$ degrees of freedom. Integrals
defining the loss and the metric are evaluated by interpolation onto the FE
space followed by Gauss-Jacobi quadrature of order $2d = 8$. For the final
evaluation of errors, we employ a Gauss-Jacobi quadrature of order $4d = 16$. A
summary of the discretization setups is provided in
\Cref{tab:apx_discretization}.

\begin{table}[htbp]
  \centering
  \caption{Domain discretization, boundary conditions, and evaluation setups for each numerical experiment.}
  \label{tab:apx_discretization}
  \resizebox{\textwidth}{!}{
    \begin{tblr}{
        colspec = {llll},
        row{even} = {gray!15}, 
        row{1} = {white},      
      }
      \toprule
      \textbf{Experiment} & \textbf{Interior / Mesh configuration} & \textbf{Boundary Points} & \textbf{Evaluation set}  \\
      \midrule
      Poisson 3D          & 10\,000 random                         & 1\,000 random            & 100\,000 random          \\
      Heat 3+1D           & 10\,000 random                         & 1\,000 random            & 100\,000 random          \\
      Kovasznay           & 10\,000 random                         & 1\,000 random            & 100\,000 random          \\
      \midrule
      Deep-Ritz Poisson   & Quad mesh, $h=1/30$, $d=4$ (14.5k DoF) & -                        & Gauss-Jacobi, order $16$ \\
      FEINNs 2D           & Quad mesh, $h=1/30$, $d=4$ (14.5k DoF) & -                        & Gauss-Jacobi, order $16$ \\
      \bottomrule
    \end{tblr}
  }
\end{table}

\section{Sensitivity analysis}
\label{apx:sec:sensitivity}
We conduct a sensitivity analysis of Nystr{\"o}mNGD and RPCholNGD with respect to
their key hyperparameters. Specifically, we examine the impact of the
regularization parameter $\mu$ and the rank adaptivity strategy on the
convergence behavior and computational efficiency. For simplicity, we focus on
Nystr{\"o}mNGD with Gaussian test matrices, as we found the behavior to be
similar when using sampling test matrices.

\paragraph{Regularization parameter $\mu$}
In all experiments except the Kovasznay flow, which employs the
Levenberg--Marquardt adaptation strategy for $\mu$, we set $\mu = \gamma \cdot
  \epsilon_{\textrm{mach}} \cdot \hat{\lambda}_1$ to regularize the least squares
problem, solving with $(\mG(\vtheta) + \mu \mI)^{-1}$ instead of
$\mG(\vtheta)^\dagger$. The default value is $\gamma = 10$. Setting $\gamma$
much larger leads to excessive regularization and a search direction that
deviates from the natural gradient, leading to a loss of accuracy. Setting it
much smaller introduces numerical instability in both methods, though with
different sensitivity. For RPCholNGD, divergence is already visible at $\gamma =
  1$ on the strong-form problems (Poisson 3D, Heat 3+1D). NystromNGD is more
robust: $\gamma = 1$ still yields good accuracy, but $\gamma = 0.1$ causes
divergence on the variational problems and also increases the Nystr{\"o}m rank
parameter $\ell$, raising per-iteration computational cost. Results are reported
in \Cref{apx:tab:mu_sensitivity_nystrom,apx:tab:mu_sensitivity_rpchol}.

\begin{table}[htbp]
  \centering
  \caption{Sensitivity analysis of $\gamma$ for Nystr{\"o}mNGD with Gaussian test
    matrices. Relative $\rmH^1(\Omega)$ error (top) and wall-clock time relative to
    the default $\gamma = 10$ (bottom, in parentheses).}
  \label{apx:tab:mu_sensitivity_nystrom}
  \begin{tblr}{
      colspec = {l|cccc},
      row{even} = {gray!15},
      row{1} = {white},
    }
    \toprule
    $\boldsymbol{\gamma}$
     & \textbf{Poisson 3D}
     & \textbf{Heat 3+1D}
     & \textbf{Deep-Ritz Poisson}
     & \textbf{FEINNs 2D}         \\
    \midrule
    $\mathbf{0.1}$
     & {$1.87\times10^{-6}$       \\ (1.004$\times$)}
     & {$7.42\times10^{-6}$       \\ (1.023$\times$)}
     & {$7.49\times10^{-1}$       \\ (1.267$\times$)}
     & {$8.69\times10^{-3}$       \\ (1.258$\times$)} \\
    $\mathbf{1}$
     & {$5.57\times10^{-7}$       \\ (1.000$\times$)}
     & {$2.17\times10^{-6}$       \\ (1.004$\times$)}
     & {$1.61\times10^{-5}$       \\ (1.096$\times$)}
     & {$1.36\times10^{-4}$       \\ (1.111$\times$)} \\
    $\mathbf{10}$ \textbf{(def.)}
     & {$1.66\times10^{-6}$       \\ (1.00$\times$)}
     & {$1.89\times10^{-6}$       \\ (1.00$\times$)}
     & {$1.63\times10^{-5}$       \\ (1.00$\times$)}
     & {$6.60\times10^{-5}$       \\ (1.00$\times$)} \\
    $\mathbf{100}$
     & {$3.72\times10^{-6}$       \\ (0.963$\times$)}
     & {$3.80\times10^{-6}$       \\ (0.997$\times$)}
     & {$1.70\times10^{-5}$       \\ (0.961$\times$)}
     & {$6.62\times10^{-5}$       \\ (0.988$\times$)} \\
    $\mathbf{1000}$
     & {$8.37\times10^{-6}$       \\ (0.871$\times$)}
     & {$6.52\times10^{-6}$       \\ (0.959$\times$)}
     & {$2.65\times10^{-5}$       \\ (0.972$\times$)}
     & {$6.78\times10^{-5}$       \\ (0.911$\times$)} \\
    $\mathbf{10000}$
     & {$1.35\times10^{-5}$       \\ (0.720$\times$)}
     & {$7.01\times10^{-6}$       \\ (0.759$\times$)}
     & {$3.10\times10^{-5}$       \\ (0.851$\times$)}
     & {$7.30\times10^{-5}$       \\ (0.784$\times$)} \\
    \bottomrule
  \end{tblr}
\end{table}

\begin{table}[htbp]
  \centering
  \caption{Sensitivity analysis of $\gamma$ for RPCholNGD. Relative $\rmH^1(\Omega)$
    error (top) and wall-clock time relative to the default $\gamma = 10$ (bottom,
    in parentheses).}
  \label{apx:tab:mu_sensitivity_rpchol}
  \begin{tblr}{
      colspec = {l|cccc},
      row{even} = {gray!15},
      row{1} = {white},
    }
    \toprule
    $\boldsymbol{\gamma}$
     & \textbf{Poisson 3D}
     & \textbf{Heat 3+1D}
     & \textbf{Deep-Ritz Poisson}
     & \textbf{FEINNs 2D}         \\
    \midrule
    $\mathbf{0.1}$
     & {$8.42\times10^{-3}$       \\ (1.062$\times$)}
     & {$7.48\times10^{-3}$       \\ (1.055$\times$)}
     & {$2.12$                    \\ (1.224$\times$)}
     & {$1.10\times10^{-2}$       \\ (1.226$\times$)} \\
    $\mathbf{1}$
     & {$7.67\times10^{-3}$       \\ (1.066$\times$)}
     & {$5.96\times10^{-4}$       \\ (1.027$\times$)}
     & {$2.17\times10^{-5}$       \\ (1.105$\times$)}
     & {$2.64\times10^{-4}$       \\ (1.101$\times$)} \\
    $\mathbf{10}$ \textbf{(def.)}
     & {$1.69\times10^{-6}$       \\ (1.00$\times$)}
     & {$2.18\times10^{-6}$       \\ (1.00$\times$)}
     & {$1.63\times10^{-5}$       \\ (1.00$\times$)}
     & {$6.61\times10^{-5}$       \\ (1.00$\times$)} \\
    $\mathbf{100}$
     & {$4.16\times10^{-6}$       \\ (0.967$\times$)}
     & {$3.36\times10^{-6}$       \\ (0.979$\times$)}
     & {$1.69\times10^{-5}$       \\ (1.010$\times$)}
     & {$6.63\times10^{-5}$       \\ (0.933$\times$)} \\
    $\mathbf{1000}$
     & {$9.50\times10^{-6}$       \\ (0.846$\times$)}
     & {$5.42\times10^{-6}$       \\ (0.949$\times$)}
     & {$2.67\times10^{-5}$       \\ (0.947$\times$)}
     & {$6.84\times10^{-5}$       \\ (0.887$\times$)} \\
    $\mathbf{10000}$
     & {$1.23\times10^{-5}$       \\ (0.720$\times$)}
     & {$8.74\times10^{-6}$       \\ (0.780$\times$)}
     & {$3.37\times10^{-5}$       \\ (0.885$\times$)}
     & {$7.77\times10^{-5}$       \\ (0.834$\times$)} \\
    \bottomrule
  \end{tblr}%
\end{table}

\paragraph{Rank adaptivity}
We adapt the Nystr{\"o}mNGD rank parameter $\ell$ as the smallest index
satisfying $\hat{\lambda}_{\ell} / \mu < r$, following the recommendation $r =
  10$ from \cite{frangella_randomized_2023}. Decreasing $r$ increases the rank
$\ell$, improving the quality of the preconditioner at the cost of higher
computational expense. The results in \Cref{apx:tab:sensitivity_r} confirm this
trade-off across all five test problems, including the Kovasznay flow. Notably,
NystromNGD is very robust to the choice of $r$: accuracy remains essentially
unchanged even at $r = 1000$, while the per-iteration cost decreases
significantly. The default value $r = 10$ achieves a good balance between
accuracy and per-iteration efficiency, although also higher values achieve good
accuracy at lower cost.

\begin{table}[htbp]
  \centering
  \caption{Sensitivity analysis of the parameter $r$ for Nystr{\"o}mNGD with
    Gaussian test matrices. Relative $\rmH^1(\Omega)$ error (top) and wall-clock
    time relative to the default $r = 10$ (bottom, in parentheses).}
  \label{apx:tab:sensitivity_r}
  \begin{tblr}{
      colspec = {l|ccccc},
      row{even} = {gray!15},
      row{1} = {white},
    }
    \toprule
    $\boldsymbol{r}$
     & \textbf{Poisson 3D}
     & \textbf{Heat 3+1D}
     & \textbf{Kovasznay}
     & \textbf{Deep-Ritz Poisson}
     & \textbf{FEINNs 2D}         \\
    \midrule
    $\mathbf{0.1}$
     & {$1.68\times10^{-6}$       \\ (1.00$\times$)}
     & {$2.03\times10^{-6}$       \\ (1.00$\times$)}
     & {$6.95\times10^{-7}$       \\ (1.08$\times$)}
     & {$1.64\times10^{-5}$       \\ (1.14$\times$)}
     & {$6.61\times10^{-5}$       \\ (1.13$\times$)} \\
    $\mathbf{1}$
     & {$1.60\times10^{-6}$       \\ (1.00$\times$)}
     & {$2.22\times10^{-6}$       \\ (1.00$\times$)}
     & {$8.57\times10^{-7}$       \\ (1.08$\times$)}
     & {$1.65\times10^{-5}$       \\ (1.13$\times$)}
     & {$6.61\times10^{-5}$       \\ (1.06$\times$)} \\
    $\mathbf{10}$ \textbf{(def.)}
     & {$1.66\times10^{-6}$       \\ (1.00$\times$)}
     & {$1.89\times10^{-6}$       \\ (1.00$\times$)}
     & {$8.24\times10^{-7}$       \\ (1.00$\times$)}
     & {$1.63\times10^{-5}$       \\ (1.00$\times$)}
     & {$6.60\times10^{-5}$       \\ (1.00$\times$)} \\
    $\mathbf{100}$
     & {$1.57\times10^{-6}$       \\ (0.96$\times$)}
     & {$2.06\times10^{-6}$       \\ (1.00$\times$)}
     & {$9.93\times10^{-7}$       \\ (0.86$\times$)}
     & {$1.62\times10^{-5}$       \\ (0.97$\times$)}
     & {$6.60\times10^{-5}$       \\ (0.98$\times$)} \\
    $\mathbf{1000}$
     & {$1.59\times10^{-6}$       \\ (0.85$\times$)}
     & {$2.66\times10^{-6}$       \\ (0.92$\times$)}
     & {$7.48\times10^{-7}$       \\ (0.66$\times$)}
     & {$1.62\times10^{-5}$       \\ (0.94$\times$)}
     & {$6.62\times10^{-5}$       \\ (0.90$\times$)} \\
    \bottomrule
  \end{tblr}%
\end{table}

For RPCholNGD, rank adaptivity is instead controlled by an absolute error
tolerance $\varepsilon = r \cdot p \cdot \epsilon_{\mathrm{mach}}$ on the trace
of the residual matrix, with a default value of $r = 1$.RPCholesky terminates
early when the residual trace falls below $\varepsilon$, so larger $r$ yields a
lower rank and lower per-iteration cost, while smaller $r$ forces a higher rank
at greater expense. The results in \Cref{apx:tab:sensitivity_rpchol_r} show that
the method is robust for small $r$ (down to $r = 0.01$), with accuracy and
runtime essentially unchanged. For large $r$, accuracy degrades on the
variational problems, while the minor fluctuations on the strong-form problems are likely due to the stochasticity of the method.

\begin{table}[htbp]
  \centering
  \caption{Sensitivity analysis of the RPCholesky stopping tolerance scaling
    factor $r$ ($\varepsilon = r \cdot p \cdot \epsilon_{\mathrm{mach}}$) for
    RPCholNGD. Relative $\rmH^1(\Omega)$ error (top) and wall-clock time relative
    to the default $r = 1$ (bottom, in parentheses).}
  \label{apx:tab:sensitivity_rpchol_r}
  \begin{tblr}{
      colspec = {l|ccccc},
      row{even} = {gray!15},
      row{1} = {white},
    }
    \toprule
    $\boldsymbol{r}$
     & \textbf{Poisson 3D}
     & \textbf{Heat 3+1D}
     & \textbf{Kovasznay}
     & \textbf{Deep-Ritz Poisson}
     & \textbf{FEINNs 2D}         \\
    \midrule
    $\mathbf{0.01}$
     & {$1.83\times10^{-6}$       \\ (1.00$\times$)}
     & {$2.11\times10^{-6}$       \\ (1.00$\times$)}
     & {$9.79\times10^{-7}$       \\ (1.00$\times$)}
     & {$1.64\times10^{-5}$       \\ (1.14$\times$)}
     & {$6.60\times10^{-5}$       \\ (1.09$\times$)} \\
    $\mathbf{0.1}$
     & {$1.88\times10^{-6}$       \\ (1.00$\times$)}
     & {$2.17\times10^{-6}$       \\ (1.00$\times$)}
     & {$9.79\times10^{-7}$       \\ (1.00$\times$)}
     & {$1.63\times10^{-5}$       \\ (1.10$\times$)}
     & {$6.60\times10^{-5}$       \\ (1.05$\times$)} \\
    $\mathbf{1}$ \textbf{(def.)}
     & {$1.69\times10^{-6}$       \\ (1.00$\times$)}
     & {$2.18\times10^{-6}$       \\ (1.00$\times$)}
     & {$1.12\times10^{-6}$       \\ (1.00$\times$)}
     & {$1.63\times10^{-5}$       \\ (1.00$\times$)}
     & {$6.61\times10^{-5}$       \\ (1.00$\times$)} \\
    $\mathbf{10}$
     & {$5.46\times10^{-6}$       \\ (0.98$\times$)}
     & {$2.20\times10^{-6}$       \\ (1.00$\times$)}
     & {$8.81\times10^{-7}$       \\ (1.00$\times$)}
     & {$1.75\times10^{-5}$       \\ (1.00$\times$)}
     & {$1.81\times10^{-3}$       \\ (0.99$\times$)} \\
    $\mathbf{100}$
     & {$1.68\times10^{-6}$       \\ (0.82$\times$)}
     & {$1.93\times10^{-6}$       \\ (0.87$\times$)}
     & {$1.35\times10^{-6}$       \\ (1.00$\times$)}
     & {$3.14\times10^{-1}$       \\ (1.16$\times$)}
     & {$6.62\times10^{-5}$       \\ (0.90$\times$)} \\
    \bottomrule
  \end{tblr}%
\end{table}

\section{Choice of the Metric}
\label{apx:sec:metric}
The link with operator preconditioning in \Cref{sec:operator_preconditioning_and_feinns} makes it evident that the choice of metric has a crucial influence on the convergence speed of NGD. In some cases, the problem's structure naturally suggests a metric, as in Variational MonteCarlo \cite{pfau_ab_2020,hermann_deep-neural-network_2020,muller_position_2024} or Sobolev gradient flows \cite{henning_sobolev_2020,chen_convergence_2024}. In general, the link between NGD and Riemannian gradient descent \cite{muller_achieving_2023} suggests selecting the metric $g$ to reduce the condition number of the Riemannian Hessian \cite{boumal_introduction_2023}. Indeed, it can be shown that NGD corresponds to Riemannian gradient descent \cite{boumal_introduction_2023} on the manifold $(\rmV, g)$, up to the $g$-orthogonal projection of the Riemannian gradient onto the model's generalized tangent space $\Tang_{\vtheta}\calN$, and a second-order correction; see \cite[Theorem 1]{muller_position_2024}. More precisely, the NGD update $\vtheta_{k+1} = \vtheta_{k} - \alpha_k \mG(\vtheta_k)^{\dagger}\nabla L(\vtheta_k)$ satisfies
\begin{equation*}
  u_{\vtheta_{k+1}} = u_{\vtheta_{k}} - \alpha_k \Pi_{\vtheta_k}^g\left(\grad[g] E(u_{\vtheta_k})\right) + \varepsilon_k,
\end{equation*}
where $\Pi_{\vtheta_k}^g$ denotes the $g$-orthogonal projection onto the generalized tangent space $\Tang_{\vtheta_k}\calN$, and $\varepsilon_k = \calO\big(\alpha_k^2\norm{\mG(\vtheta_k)^{\dagger}\nabla L(\vtheta_k)}^2\big)$. A principled strategy for selecting the metric $g$ is thus to adopt criteria from Riemannian optimization, choosing $g$ to reduce the condition number of the Riemannian Hessian \cite{boumal_introduction_2023,absil_optimization_2008}. Although convergence may differ from standard Riemannian gradient descent due to the projection onto $\Tang_{\vtheta}\calN$, whose effect depends on the geometry of $\calN$ and the parametrization, this strategy can be expected to remain effective if the ansatz $\calN$ is sufficiently expressive.

\subsection{Linear problems}
\label{apx_sec:linear_problems}
\begin{setting}
  \label{apx_setting:linear}
  Let $\rmV, \rmW$ be Hilbert spaces, and let $\calA:\rmV\to\dual{\rmW}$ be a linear isomorphism. Given $f\in\dual{\rmW}$ we aim at solving
  \begin{equation}
    \label{apx_eq:linear_problem}
    \calA u = f \in\dual{\rmW}.
  \end{equation}
\end{setting}
A natural approach is to consider the least-squares problem
\begin{equation}
  \label{apx_eq:linear_lossfun}
  \inf_{u\in\rmV} E(u) \defas \frac{1}{2} \norm{\calA u - f}_{\dual{\rmW}}^2.
\end{equation}

Choosing on $\rmV$ the (point-independent) metric
\begin{equation}
  \label{apx_eq:linear_metric}
  \bilinear[g]{v}{w} \defas \innerh[\dual{\rmW}]{\calA v}{\calA w},
\end{equation}
yields $\Hess[g] E = \mathrm{Id}$. Riemannian gradient descent on $(\rmV, g)$
thus converges in a single iteration, as the Riemannian gradient is given by
$\grad[g] E(u) = u - \calA^{-1}f$, i.e., computing it is equivalent to solving
\eqref{apx_eq:linear_problem}. NGD does not require applying $\calA^{-1}$:
computing the Gramian $\left[\mG(\vtheta)\right]_{ij} =
  \innerh[\dual{\rmW}]{\calA \partial_{\theta_i}u_{\vtheta}}{\calA
  \partial_{\theta_j}u_{\vtheta}}$ involves only applications of $\calA$.

An alternative is to choose the metric
\begin{equation}
  \label{apx_eq:linear_metric_dual}
  \bilinear[g]{v}{w} \defas \innerh[\rmV]{v}{w},
\end{equation}
which corresponds to the standard $\rmV$-inner product. In this case, the
Riemannian gradient is $\grad[g] E(u) = \calA^*(\calA u - f)$, and the
Riemannian Hessian is $\Hess[g] E = \calA^* \calA$. If an inf-sup condition with
constant $\beta > 0$ holds, i.e., $\inf_{v\in\rmV}\sup_{w\in\rmW}
  \frac{\dualpair[\rmW]{\calA v}{w}}{\norm{v}_\rmV \norm{w}_\rmW} \geq \beta$,
then the condition number of the Hessian is bounded by $\norm{\calA}^2 /
  \beta^2$.

\begin{remark}
  We formulate the linear problem from $\rmV$ to the \emph{dual} of $\rmW$ because one of our target applications involves variational formulations of PDEs, which naturally take the form \eqref{apx_eq:linear_problem}. For PINNs, which are based on strong formulations, $\rmW$ is typically a (product of) $\rmL^2$ spaces, so that $\dual{\rmW} \cong \rmW$ and the formulation recovers that of \cite{zeinhofer_unified_2024}, which does not employ the dual.
\end{remark}

\begin{remark}
  In Setting~\ref{apx_setting:linear}, it is typically straightforward to approximate numerically the inner product $\innerh[\rmW]{\cdot}{\cdot}$ and the duality pairing $\dualpair[\rmW]{\cdot}{\cdot}$, but not the dual inner product $\innerh[\dual{\rmW}]{\cdot}{\cdot}$. Let $\calR_\rmW: \rmW \to \dual{\rmW}$ denote the Riesz map, defined by
  \[
    \dualpair[\rmW]{\calR_\rmW v}{w} = \innerh[\rmW]{v}{w} \qquad \forall v,w \in \rmW.
  \]
  Then, for $\ell, h \in \dual{\rmW}$, we have
  \[
    \innerh[\dual{\rmW}]{\ell}{h} = \dualpair[\rmW]{\ell}{\calR_{\rmW}^{-1} h}.
  \]
  For example, when $\rmW = \rmH^1_0(\Omega)$, the dual space is $\dual{\rmW} = \rmH^{-1}(\Omega)$, and $\calR_{\rmW} = \Delta$, the Laplace operator. Computing the dual inner product thus requires applying $(-\Delta^{-1})$, i.e., solving a (discretized) Laplace problem. In contrast, for $\rmW = \rmL^2(\Omega)$, one has $\dual{\rmW} \cong \rmW$ and $\calR_{\rmW} = \mathrm{Id}$.
\end{remark}

\paragraph{Symmetric positive definite linear operators}
If $\rmV = \rmW$ and $\calA$ is symmetric positive definite, one can endow $\rmV$ with the $\calA$-induced inner product
\[
  \innerh[\calA]{v}{w} = \dualpair[\rmV]{\calA v}{w} \qquad \forall v,w \in \rmV.
\]
This corresponds to choosing $\calR_\rmV = \calA$, and the loss \eqref{apx_eq:linear_lossfun} becomes
\begin{equation}
  \label{apx_eq:linear_deepritz_lossfun}
  E(u) = \frac{1}{2}\dualpair[\rmV]{\calA u}{u} - \dualpair[\rmV]{f}{u} + \mathrm{const},
\end{equation}
where the constant is irrelevant for gradient-based optimization. This is precisely the variational form used in the Deep Ritz method \cite{e_deep_2017}. The corresponding natural metric is $g = \innerh[\calA]{\cdot}{\cdot}$.

\subsection{Nonlinear problems}
For general nonlinear problems, there is no canonical choice of metric as in the linear case. In some instances, the structure of the problem suggests multiple metrics; the choice of the metric impacts the convergence and the computational cost of the optimization schemes. Examples include Variational Monte Carlo (VMC) \cite{pfau_ab_2020,hermann_deep-neural-network_2020,muller_position_2024} or Sobolev gradient flows \cite{henning_sobolev_2020,chen_convergence_2024}. Below we outline some general strategies that can be used in nonlinear problems and are employed in our numerical experiments.

\paragraph{Newton's method}
\begin{setting}
  \label{apx_setting:nonlinear_newton}
  Let $(\rmV, a)$ be a Riemannian manifold and $f:\rmV\to\R$ a smooth function. We aim to solve $\inf_{u\in\rmV} E(u)$.
\end{setting}

If $\Hess[a] E(u)$ is symmetric positive definite for all $u\in\rmV$, a natural metric leading to Riemannian Newton's method \cite{boumal_introduction_2023} is
\begin{equation}
  \label{apx_eq:newton_metric}
  \bilinear[g_u]{v}{w} = \Hess[a] E(u)[v,w].
\end{equation}
In the case $(\rmV, a) = (\rmV, \innerh[\rmV]{\cdot}{\cdot})$ for a Hilbert space $\rmV$, this yields the Energy Natural Gradient Descent (ENGD) method \cite{muller_achieving_2023,muller_position_2024}. Note that to interpret ENGD as a Natural Gradient Descent, it is necessary that $\Hess[a] E(u)$ is SPD for all $u \in \rmV$, as otherwise \eqref{apx_eq:newton_metric} does not define a valid metric.

\paragraph{Nonlinear \emph{least-squares} problem: Gauss-Newton}
\begin{setting}
  \label{apx_setting:nonlinear_gauss}
  Let $\rmV, \rmW$ be Hilbert spaces, and let $\calR:\rmV\to\rmW$ be a (possibly nonlinear) residual. We aim at finding $u\in\rmV$ such that $\calR(u) = 0$.
\end{setting}

A natural approach is to minimize the nonlinear least-squares functional
\begin{equation}
  \label{apx_eq:nonlinear_lossfun}
  \inf_{u\in\rmV} E(u) \defas \inf_{u\in\rmV}\frac{1}{2} \norm{\calR(u)}_{\rmW}^2.
\end{equation}
The Hessian of $f$ reads
\begin{equation*}
  \rmD^2E(u)[v, w] = \innerh[\rmW]{\rmD\calR(u)[v]}{\rmD\calR(u)[w]} + \innerh[\rmW]{\calR(u)}{\rmD^2\calR(u)[v,w]},
\end{equation*}
and a symmetric positive semidefinite approximation is given by the Gauss-Newton method \cite{wright_numerical_2006}
\begin{equation}
  \label{apx_eq:gauss_metric}
  \bilinear[g_u]{v}{w} = \innerh[\rmW]{\rmD\calR(u)[v]}{\rmD\calR(u)[w]}.
\end{equation}
This choice of the metric leads the Gauss-Newton NGD method
\cite{jnini_gauss-newton_2024,hao_gauss_2024}. Note that $g_u$ is, in general, only semidefinite. While this does not pose numerical issues when using the pseudoinverse or damping, the positive definitness of $g_u$ is necessary to interpret it as a Riemannian metric. Whether this condition holds depends on the structure of the underlying nonlinear PDE.

\end{document}